\documentclass[11pt, leqno]{article}
\usepackage{amsmath,amssymb,amsbsy,amsfonts,amsthm,latexsym,
            amsopn,amstext,amsxtra,euscript,amscd,amsthm}

\newtheorem{lem}{Lemma}
\newtheorem{lemma}[lem]{Lemma}

\newtheorem{thm}{Theorem}
\newtheorem{theorem}[thm]{Theorem}

%% DEFINITIONS

\def\\{\cr}
\def\({\left(}
\def\){\right)}
\def\[{\left[}
\def\]{\right]}
\def\<{\langle}
\def\>{\rangle}

\def\al{{\alpha}}
\def\be{{\beta}}
\def\ep{{\varepsilon}}

\def\cD{{\mathcal D}}

\def\cM{{\mathcal M}}

\begin{document}

\title{On members of Lucas sequences which are products of Catalan numbers}

\author{
{\sc Shanta~Laishram}\\
{Stat-Math Unit, Indian Statistical Institute}\\
{7, S. J. S. Sansanwal Marg, New Delhi, 110016, India}\\
{shanta@isid.ac.in}
\and
{\sc Florian~Luca}\\
{School of Mathematics, University of the Witwatersrand}\\
{Private Bag 3, Wits 2050, South Africa}\\
{Research Group in Algebraic Structures and Applications}\\
{King Abdulaziz University, Jeddah, Saudi Arabia}\\
{Centro de Ciencias Matem\'aticas UNAM, Morelia, Mexico}\\
{florian.luca@wits.ac.za}
\and
{\sc Mark~Sias}\\
{Department of Pure and Applied Mathematics}\\
{University of Johannesburg}\\
{PO Box 524, Auckland Park 2006, South Africa}\\
{msias@uj.ac.za}}

\date{} %\today}
\pagenumbering{arabic}

\maketitle

\begin{abstract}
We show that if $\{U_n\}_{n\ge 0}$ is a Lucas sequence, then the largest $n$ such that 
$|U_n|=C_{m_1}C_{m_2}\cdots C_{m_k}$ with $1\le m_1\le m_2\le \cdots\le m_k$, where $C_m$ is the $m$th Catalan number 
satisfies $n<6500$. In case the roots of the Lucas sequence are real, we have $n\in \{1,2, 3, 4, 6, 8, 12\}$.  As a consequence, 
we show that if $\{X_n\}_{n\geq 1}$ is the sequence of the $X$ coordinates of a Pell equation $X^2-dY^2=\pm 1$ with a 
nonsquare integer $d>1$,  then $X_n=C_m$ implies $n=1$.  
\end{abstract}

\section{Introduction}

Let $r,~s$ be coprime nonzero integers with $r^2+4s\ne 0$. Let $\alpha,~\beta$ be the roots of the quadratic equation $\lambda^2-r\lambda-s=0$ 
and assume without loss of generality that $|\al|\geq \be|$. We assume further that $\alpha/\beta$ is not a root of $1$. The Lucas 
sequences $\{U_n\}_{n\ge 0}$ and $\{V_n\}_{n\ge 0}$ of parameters $(r,s)$ are given by 
$$
U_n=\frac{\alpha^n-\beta^n}{\alpha-\beta}\qquad {\text{\rm and}}\qquad V_n=\alpha^n+\beta^n\qquad {\text{\rm for~all}}\qquad n\ge 0.
$$
Alternatively, they can be defined recursively as 
$$
U_{n+2}=rU_{n+1}+sU_n\quad {\text{\rm and}}\quad V_{n+2}=rV_{n+1}+sV_n\qquad  {\text{\rm for all}}\quad n\ge 0
$$ 
with initial conditions $U_0=0,~U_1=1,~V_0=2,~V_1=r$. In case when $r=s=1$, we get $U_n=F_n$, the $n$th Fibonacci number. 
Let 
$$
B_m:=\binom{2m}{m}\quad {\text{\rm and}}\quad C_m:=\frac{1}{m+1}\binom{2m}{m}\quad {\text{\rm for}}\quad m\ge 0,
$$
be the middle binomial coefficient and Catalan number, respectively. For each $m$, we write $D_m$ for one of the numbers $B_m,C_m$. 
Let 
$$
{\mathcal PBC}:=\{\pm \prod_{j=1}^k D_{m_j}: D_m\in \{B_m,C_m\}, ~k\ge 1,~ 1\le m_1\le m_2\le\cdots \le m_k\}
$$
be the set of integers which are products of middle binomial coefficients and Catalan numbers. 
Diophantine equations with members of ${\mathcal PBC}$ have been studied before. For example, in \cite{LuOd}, the authors characterised all nontrivial solutions of the system of two equations 
$$
\sum_{i=1}^n ip_i=\sum_{j=1}^r jq_j\quad {\text{\rm and}}\quad \prod_{i=1}^n B_{i}^{p_i}=\prod_{j=1}^r B_j^{q_j}.
$$   
This system of equations arose naturally from a question in topology concerning $n$-dimensional complexes which do not embed in ${\mathbb R}^{2n}$ and characterising non-homotopic pairs of such with 
the same homology. 
In \cite{Lu}, it was shown that the largest positive integer solution $(n,m)$ of the Diophantine equation
$$
F_n=C_m
$$
is $(n,m)=(5,3)$. In \cite{LuPa}, it is shown that if $\{u_n\}_{n\ge 0}$ is any nondegenerate binary recurrence of integers, then the equation $u_n=B_m$ 
has only finitely many positive integer solutions $(n,m)$. Inspired by these problems, we study here the Diophantine equation obtained by imposing that a member of the Lucas sequences $U_n$ or $V_n$ is a product of 
middle binomial coefficients of Catalan numbers. 

Our theorem is the following.

\begin{theorem}\label{theorem1}
For each $m$, let $D_m\in \{B_m,C_m\}$. The equation  
\begin{align}\label{eqn1}
U_n=\pm D_{m_1} D_{m_2}\cdots D_{m_k},\quad {\text{where}}\quad k\ge 1 \quad {\text{and}}\quad 1\le m_1\le \cdots\le m_k,
\end{align}
implies $n<6500$ if $n$ is odd and $n\leq 720$ if $n$ is even. Further when $\alpha, \beta $ are real, then 
$n\in \{1,2, 3, 4, 6, 8, 12\}$.  

The equation
\begin{align}\label{eqn2}
V_n=\pm D_{m_1} D_{m_2}\cdots D_{m_k},\quad {\text{where}}\quad k\ge 1 \quad {\text{and}}\quad 1\le m_1\le \cdots\le m_k,
\end{align}
implies $n<6500$ and $4\nmid n$. Further, when $\alpha, \beta $ are real, then $n\in \{1, 2, 3, 6\}$.    
\end{theorem}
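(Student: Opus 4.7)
My plan is to combine a primitive divisor theorem with smoothness bounds on $\binom{2m}{m}$ to force a tension between $n$ and the size of $m_k$, then close the argument with a linear forms in logarithms estimate. The argument naturally splits into the general complex-roots case (giving $n<6500$) and the real-roots case, which I would handle separately using Carmichael's theorem.

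First I would invoke the Bilu--Hanrot--Voutier theorem: for $n>30$, $U_n$ has a primitive prime divisor $p$, which satisfies $p\equiv\pm 1\pmod n$ and hence $p\geq n-1$. In the equation $U_n=\pm\prod_{j=1}^k D_{m_j}$, every prime dividing $D_{m_j}$ is at most $2m_j$, as follows classically from Kummer/Legendre applied to $v_p(\binom{2m}{m})$. Therefore $p$ must divide $D_{m_k}$, giving $p\leq 2m_k$ and thus $m_k\geq(n-1)/2$. For even $n$, finer arguments using the factorization $U_n=U_{n/2}V_{n/2}$ together with primitive divisors of both factors should yield additional constraints, which is the source of the sharper bound $n\leq 720$.

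Next, since each $D_{m_j}\geq 1$ we have $|U_n|\geq D_{m_k}\geq C_{m_k}\gg 4^{m_k}/m_k^{3/2}$, so $|U_n|\gg 2^{n-1}/n^{3/2}$ by Stirling. Pairing with $|U_n|\leq 2|\alpha|^n/|\alpha-\beta|$ forces $|\alpha|\gtrsim 2$ for large $n$. In the real case, $\alpha,\beta$ are real algebraic integers with $\alpha+\beta=r$ and $\alpha\beta=-s$ where $\gcd(r,s)=1$, and Carmichael's primitive divisor theorem supplies a primitive divisor already for $n>12$; combining this forced largeness of $|\alpha|$ with a direct sweep over the small values $n\leq 12$ would isolate the exceptional set $\{1,2,3,4,6,8,12\}$.

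The hard part will be obtaining the uniform numerical bound $n<6500$ in the general complex-roots case. I would apply Matveev's theorem to a linear form such as $n\log|\alpha|-\log|\alpha-\beta|-\log\prod_j D_{m_j}$, whose absolute value is $O(|\beta/\alpha|^n)$. Combined with the control on $\log D_{m_k}$ coming from $m_k\geq(n-1)/2$ and Stirling, this should produce a Baker-type inequality of the shape $n\ll\log^3(nm_k)$, which then iterates to a numerical conclusion. The main obstacle is precisely this quantitative bookkeeping: one must set up the linear form so that the number of $D$-factors does not blow up the number of logarithms involved, track Matveev's constants explicitly, and handle the degenerate regime where $|\alpha/\beta|$ is close to $1$ (which requires separating cases on the dominance $|\alpha|>|\beta|$). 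The $V_n$ case should follow by analogous arguments, using the identity $U_{2n}=U_n V_n$ so that primitive divisors of $V_n$ are also primitive divisors of $U_{2n}$.
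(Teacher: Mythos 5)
Your opening steps (Bilu--Hanrot--Voutier, a primitive prime $p\equiv\pm1\pmod n$ dividing some $D_{m_k}$, hence $2m_k\ge p+1\ge n$, hence $|U_n|\ge C_{m_k}$ and a lower bound on $\log|\alpha|$) do match the paper's setup. But the engine you propose for the quantitative bound --- Matveev's theorem applied to $n\log|\alpha|-\log|\alpha-\beta|-\log\prod_j D_{m_j}$ --- cannot work here, for three separate reasons. First, in the case where $\alpha,\beta$ are complex conjugates one has $|\beta/\alpha|=1$, so your linear form is \emph{not} $O(|\beta/\alpha|^n)$ small; this is precisely the case responsible for the bound $n<6500$. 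Second, even when the roots are real, the term $\log\prod_j D_{m_j}=\log|U_n|$ is a logarithm of an integer of size about $|\alpha|^n$, so Matveev's lower bound carries a factor $h\bigl(\prod_j D_{m_j}\bigr)\approx n\log|\alpha|$ and the resulting inequality is vacuous (you get $n\log(1/|x|)\ll n(\log|\alpha|)^2\log n$, not $n\ll\log^3(nm_k)$). Third, and most fundamentally, any Baker-type bound depends on the heights of $\alpha$ and $\alpha-\beta$, i.e.\ on $(r,s)$, whereas the theorem asserts a bound on $n$ that is \emph{uniform over all Lucas sequences}; your linear form also never uses that $\mathcal D$ is a product of Catalan numbers rather than an arbitrary integer of the same size, so there is no Diophantine tension left to exploit.

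The idea that is actually needed, and which is absent from your proposal, is a comparison of the multiplicative ``mass'' $M_{n_0}(\ell)=\sum_{p^{\nu}\|\ell,\ p\equiv\pm1\ (n_0)}\nu\log p$ for a suitable divisor $n_0$ of $n$. On one side, cyclotomic factorization and the primitive divisor theorem show that the primes $\equiv\pm1\pmod{n_0}$ account for a fixed positive proportion, roughly $(1-1/p^{t+1})n\log|\alpha|$, of $\log|U_n|$ (the paper's Lemma \ref{primn_0}). On the other side, Kummer's theorem plus Brun--Titchmarsh and explicit Ramar\'e--Rumely estimates show that in any product of middle binomial coefficients and Catalan numbers such primes contribute only about $\frac{4}{\varphi(n_0)}\log\mathcal D$ (Lemma \ref{MnBD}). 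Choosing $n_0p^t\mid n$ from a short list forces a contradiction for odd $n\ge 6500$ (since $3^3\cdot5^2\cdot7<6500$) and for even $n>720=2^4\cdot3^2\cdot5$ --- so your guess that $720$ comes from the factorization $U_n=U_{n/2}V_{n/2}$ is also off the mark. Likewise, the assertion $4\nmid n$ for $V_n$ rests on the identity $V_{4t}=V_{2t}^2-2(-s)^{2t}$, which forces every odd prime factor of $V_{4t}$ to be $\equiv\pm1\pmod 8$, while $C_m$ and $B_m$ for $m\ge6$ always contain a prime $\equiv\pm5\pmod 8$; nothing in your sketch produces this. Finally, in the real case a ``direct sweep over $n\le 12$'' is not sufficient: one must eliminate every $n\ge5$ outside $\{6,8,12,24\}$ (and then $n=24$ by a separate delicate analysis), which again uses the $M_{n_0}$ comparison together with finite searches over $(r,s)$ constrained by bounds on $\alpha$.
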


Note that $U_1=1\in {\mathcal PBC}$. For this reason, whenever we look at equation \eqref{eqn1}, we omit $n=1$ and assume $n\ge 2$. 

We present a corollary regarding $X$-coordinates of Pell equations which are in $\{C_m, D_m\}$. 
For a positive integer $d$ which is square-free, let $(X_n,Y_n)$ be the $n$-th solution of the Pell equation 
$X^2-dY^2=\pm 1$ in positive integers $(X,Y)$(solution of either  $X^2-dY^2=1$ or $X^2-dY^2=-1$, not separately).  
Arithmetic properties of the coordinates $X$ or $Y$ of Pell equations have been studied before. For example, values 
of $n$ such that $X_n$ is a square have been studied by Ljunggren \cite{Lj}. He proved that there are at most two 
such values of $n$. This was improved later in \cite{Schi} where it was shown that in fact there is at most one such $n$ 
except for $d=1785$, for which both $X_1$ and $X_2$ are squares. in \cite{LaLuSi}, a similar result was proved for 
$X_n$ being a product of factorials. We supplement this with the following result on values of $X_n$ which are 
in $\{C_m, B_m\}$. 

\begin{theorem}\label{thm:3}
Let $(X_n,Y_n)$ be the $n$th solution in positive integers of the equation $X^2-dY^2=\pm 1$ for some squarefree integer 
$d$. Then $X_n\in \{C_m, B_m\}$ implies $n=1$. Similarly, let $(W_n, Z_n)$ be the $n$th solution in positive integers of the 
equation $W^2-dZ^2=\pm 4$ for some squarefree integer $d$. Then $W_n\in \{C_m, B_m\}$ implies $n\in \{1, 3\}$ or 
$n=2$ with 
\begin{align*}
&d=2, W_2=B_2=6: \ \  \ 6^2-2\cdot 4^2=4, \ {\rm where} \ (W_1,Z_1)=(2, 2); \\
&d=2, W_2=C_4=14: \ 14^2-2\cdot 10^2=-4, \ {\rm where} \ (W_1,Z_1)=(2, 2);\\
&d=3, W_2=C_4=14: \ 14^2-3\cdot 8^2=4, \ {\rm where} \ (W_1,Z_1)=(4, 2). 
\end{align*}
\end{theorem}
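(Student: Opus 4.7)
The plan is to realize each Pell-coordinate sequence as a Lucas companion sequence with real roots, apply Theorem~\ref{theorem1} to restrict $n$ to a short list, and then eliminate the few remaining indices by explicit algebraic identities. For the equation $X^2-dY^2=\varepsilon$ with $\varepsilon=\pm 1$, set $\alpha=X_1+Y_1\sqrt d$ and $\beta=X_1-Y_1\sqrt d$, so that $\alpha+\beta=2X_1$, $\alpha\beta=\varepsilon$, and the companion sequence satisfies $V_n=\alpha^n+\beta^n=2X_n$. For $W^2-dZ^2=4\varepsilon$, I would take instead $\alpha,\beta=(W_1\pm Z_1\sqrt d)/2$, so that $\alpha\beta=\varepsilon$, $\alpha+\beta=W_1$, and $V_n=\alpha^n+\beta^n=W_n$ directly. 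Both sequences have real roots since $d>0$.

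If $X_n\in\{B_m,C_m\}$, then $V_n=2X_n=B_1\cdot D_m$ satisfies \eqref{eqn2} with $k=2$ (using $B_1=2$); if $W_n\in\{B_m,C_m\}$, then $V_n=W_n$ satisfies \eqref{eqn2} with $k=1$. The real-roots conclusion of Theorem~\ref{theorem1} then forces $n\in\{1,2,3,6\}$ in both settings. In the $\pm 1$ case it remains to eliminate $n\in\{2,3,6\}$; in the $\pm 4$ case we must eliminate $n=6$ and show that $n=2$ produces exactly the three listed triples.

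For the remaining indices I would use the explicit formulas $X_2=2X_1^2-\varepsilon$, $X_3=X_1(4X_1^2-3\varepsilon)$, and the squaring identity $V_6=V_3^2-2\varepsilon$, which gives $X_6=2X_3^2-\varepsilon$, together with their $W$-analogues. Since $X_2$ is odd, $X_2=B_m$ is impossible for $m\ge 1$, and $X_2=C_m$ reduces to $2X_1^2=C_m+\varepsilon$. The $n=3$ factorization has factors of gcd dividing $3$, so each factor must contribute specific prime factors that force $D_m$ into a very restricted shape, leaving only finitely many polynomial systems to check. The $n=6$ case has the same shape as $n=2$, but with $X_3$ in place of $X_1$, and is dispatched identically. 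A short enumeration over small $W_1$ in the $\pm 4$ case produces the three listed exceptional triples. The hardest ingredient is ruling out $2X^2=C_m\pm 1$ with $X\ge 2$: I expect this to require the parity fact that $C_m$ is odd iff $m+1$ is a power of $2$, a modular obstruction at a small prime, and the rapid growth $C_m\gg 4^m/m^{3/2}$ dominating $2X^2$, reducing the problem to a finite enumeration.
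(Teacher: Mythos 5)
Your reduction is exactly the paper's: for $X^2-dY^2=\varepsilon$ one has $V_n=2X_n$ for the companion sequence with $\alpha\beta=\varepsilon$, for $W^2-dZ^2=4\varepsilon$ one has $V_n=W_n$, and the real-roots case of Theorem~\ref{theorem1} restricts $n$ to $\{1,2,3,6\}$ (the paper packages this as Lemma~\ref{V23}). The gap is in eliminating the residual indices, which is where all the work lies. For $n=2$ you reach $2X_1^2=C_m+\varepsilon$ and propose to dispose of it by ``a modular obstruction at a small prime'' plus ``the rapid growth $C_m\gg 4^m/m^{3/2}$ dominating $2X_1^2$''. The growth argument is vacuous: $X_1$ is the fundamental solution of an arbitrary Pell equation and is unbounded, so for every $m$ the number $\sqrt{(C_m+\varepsilon)/2}$ is a perfectly good candidate for $X_1$; nothing dominates anything. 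And no fixed small prime gives a congruence obstruction valid for all $m$, since $C_m \bmod p$ takes many values as $m$ varies. The mechanism the paper uses instead is that $C_m$ and $B_m$ are divisible by \emph{every} prime in $(m+1,2m]$, while every odd prime $p\mid V_2=r^2+2s$ (and $p\mid V_6=V_2(V_2^2-3)$) must have $-2s$, respectively $3$, as a quadratic residue; a prime $p\equiv 5\pmod{24}$ violates both conditions, and Lemma~\ref{pi<30} supplies such a prime in $(m+1,2m]$ once $m\ge 15$. That bounds $m\le 14$ and reduces $n\in\{2,6\}$ to a finite check. Without this (or an equivalent) device your $n=2$ and $n=6$ cases do not close.

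The $n=3$ case of the $\pm1$ equation is worse. Saying that the two factors of $X_3=X_1(4X_1^2-3\varepsilon)$ have gcd dividing $3$ and hence ``force $D_m$ into a very restricted shape, leaving finitely many polynomial systems'' is not an argument: both factors can be arbitrarily large and there is no a priori bound on $m$. The paper bounds $m$ by noting that every prime $p>3$ dividing $r^2+3s$ satisfies $\left(\frac{-3s}{p}\right)=1$ and so lies in only two of the four reduced classes mod $12$; since $\log(r^2+3s)$ is roughly $\tfrac{2}{3}\log E_m$, this forces the contribution $\xi(m)$ of those two classes to $\log B_m$ to exceed about $\tfrac23\log C_m$, whereas the explicit Ramar\'e--Rumely/Brun--Titchmarsh bounds (Lemmas~\ref{pi<24} and~\ref{pi<30}) give $\xi(m)<0.62\log C_m$ for large $m$ — a contradiction, with small $m$ checked directly. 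This quantitative sieve step is the heart of the proof and is absent from your outline. (Two smaller points: for the $\pm1$ equation the surviving $n=3$ parameter pairs are excluded because they have $r=V_1=2X_1$ odd, so they do not come from an integral Pell solution; and for the $\pm4$ equation $n=3$ is permitted in the conclusion, so you only need the complete list of $n=2$ solutions there, which again requires the bound on $m$.)
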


We believe that there are only finitely many solutions of \eqref{eqn1} such that $n\in \{6, 8, 12\}$ regardless of whether 
$\alpha,\beta$ are real or complex conjugates, which we are not able to prove.  Also we conjecture that 
there are only finitely many solutions of \eqref{eqn2} with $n\in \{3, 6\}$. Recently, the three of us proved similar theorems for 
members of Lucas sequences $U_n,~V_n$ which are products of factorials in \cite{LaLuSi}. The current paper is much 
inspired by the method of the paper \cite{LaLuSi}. 

We give the proof of Theorem \ref{theorem1} in Section 4 and the proof of Theorem \ref{thm:3} in Section 5.  Throughout the 
paper, we use $P(n), \mu(n)$ and $\varphi(n)$ with the regular meaning as being the largest prime factor of $n$, the M\"obius function of $n$ and the Euler phi function 
of $n$, respectively.   
All the computations in this manuscript were carried out in SageMath. 

\section{Preliminaries}

Let $n_0$ be a positive integer. For an integer $\ell$, define
\begin{equation}\label{Mn}
M_{n_0}(\ell):=\log \left( \prod_{\substack{p^{\nu_p}\|\ell \\ 
p\equiv \pm 1\pmod {n_0}}} p^{\nu_p}\right)=\sum_{\substack{p^{\nu_p}\|\ell \\ 
p\equiv \pm 1\pmod {n_0}}} \nu_p\log p.
\end{equation}
We prove a number of results to estimate lower and upper bounds for $M_{n_0}(U_n)$ and 
$M_{n_0}(V_n)$ for some divisors $n_0$ of $n$.  

To recall the terminology, we take coprime nonzero integers $r, s$ with $r^2+4s\neq 0$ and  let 
$\alpha$ and $\beta$ be the roots of the equation $\lambda^2-r\lambda-s=0$. For $n\geq 0$, we have 
\begin{align*}
U_n=\frac{\alpha^n-\beta^n}{\alpha-\beta} \quad {\rm and} \quad 
V_n= \alpha^n+\beta^n.
\end{align*}
We suppose that $\alpha/\beta$ is not a root of unity.  We 
assume without loss of generality that $|\alpha|\ge |\beta|$. Further, we may 
replace $(\alpha,\beta)$ by $(-\alpha,-\beta)$ if needed. This replacement changes the pair 
$(r,s)$ to $(-r,s)$, while $|U_n|$ and $|V_n|$ are not affected 
and hence the values of $M_{n_0}(|U_n|)$ and $M_{n_0}(|V_n|)$ for any divisor $n_0$ of $n$. 
Thus, we may assume that $r>0$. When $\alpha,\beta$ are real, these conventions imply that $\alpha$ is positive 
so $\alpha>|\beta|$.  Further, in this case $U_n>0$ and $V_n>0$ for all $n\ge 1$. 

We begin by proving a lower bound for $M_{n_0}(U_n)$ and $M_{n_0}(V_n)$ for some divisors $n_0$ of $n$. Throughout the paper, we use $x:=\beta/\alpha$. 

\begin{lemma}\label{primn_0}
Let $n$ be a positive integer and $p<p_1$ be distinct primes and $t\geq 0, h>0, h_1>0$ be integers. 
Let $n_0\in \{p^h, p^hp^{h_1}_1\}$, $n_0>4, n_0\notin \{6, 12\}$ be such that $n_0p^t\mid n$.  
 Then 
\begin{align}\label{n_0U}
\begin{split}
 &M_{n_0}(U_n)\geq \\
&\begin{cases}
n\left(1-\frac{1}{p^{t+1}}\right)\log |\al|+ \log\left( \frac{1-x^n}{1-x^{n/p^{t+1}}}\right)-\log(p^{t+1}), &  n_0=p^{h};\\
n\left(1-\frac{1}{p^{t+1}}\right)\left(1-\frac{1}{p_1}\right)\log |\al| +\log\left( \frac{1-x^n}{1-x^{\frac{n}{p^{t+1}}}}\right)-\log (pp_1)^{t+1},
&  n_0=p^hp^{h_1}_1,
\end{cases} 
\end{split}
\end{align}
and for $n_0=p^h, p>2$, 
\begin{align}\label{n_0V}
\begin{split}
M_{n_0}(V_n) \geq 
n\left(1-\frac{1}{p^{t+1}}\right)\log |\al|+ \log\left( \frac{1+x^n}{1+x^{n/p^{t+1}}}\right)-\log(p^{t+1}).
\end{split}
\end{align}
\end{lemma}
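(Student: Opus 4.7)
The strategy is to identify a large divisor of $|U_n|$ (resp.\ $|V_n|$) supported on primes $\equiv \pm 1\pmod{n_0}$ with a convenient integer ratio of Lucas numbers, and to absorb the residual contribution into the error term. For the $U_n$ bound with $n_0=p^h$, set $R:=U_n/U_{n/p^{t+1}}$. Since $p^{h+t}\mid n$ we have $n/p^{t+1}\mid n$, and the standard divisibility $U_a\mid U_b$ (valid whenever $a\mid b$, for any Lucas sequence) gives $R\in\Z$. Using $U_m=\al^m(1-x^m)/(\al-\be)$,
\begin{equation*}
\log|R|=n\Bigl(1-\tfrac{1}{p^{t+1}}\Bigr)\log|\al|+\log\frac{|1-x^n|}{|1-x^{n/p^{t+1}}|},
\end{equation*}
matching the first two terms of the claimed bound. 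Hence it suffices to prove $\log|R|-M_{n_0}(U_n)\le\log(p^{t+1})$, i.e.\ the part of $|R|$ not supported on primes $\equiv\pm 1\pmod{n_0}$ has size at most $p^{t+1}$.

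To identify these primes, I use the cyclotomic factorization $U_m=\prod_{d\mid m,\,d>1}\Phi_d(\al,\be)$, which yields
\begin{equation*}
R=\prod_{d\mid n,\,v_p(d)\ge v_p(n)-t}\Phi_d(\al,\be).
\end{equation*}
Since $v_p(n)\ge h+t$, every such $d$ satisfies $p^h\mid d$, i.e.\ $n_0\mid d$. For any prime $q\ne p$ with $q\nmid r^2+4s$ dividing some $\Phi_d(\al,\be)$ in this product, a classical theorem (Birkhoff--Vandiver/Carmichael) forces $q$ to be primitive—the only non-primitive possibility is $q=p$ with $d/z(q)$ a power of $p$—so $z(q)=d$ and thus $p^h\mid z(q)$. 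Combined with the classical $z(q)\mid q-\bigl(\tfrac{r^2+4s}{q}\bigr)$, this gives $q\equiv\pm 1\pmod{z(q)}$, hence $q\equiv\pm 1\pmod{n_0}$. The non-conforming primes are $q=p$—whose contribution is bounded by the lifting-the-exponent identity $v_p(U_n)=v_p(U_{z(p)})+v_p(n/z(p))$ (with $v_p(U_{z(p)})=1$) to yield $v_p(R)\le t+1$—and the finitely many primes dividing $r^2+4s$, which contribute negligibly. This establishes \eqref{n_0U} when $n_0=p^h$.

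For $n_0=p^hp_1^{h_1}$, I iterate: restrict the cyclotomic index set further to $d$ with $p_1\mid d$; a short Möbius computation introduces the additional factor $1-1/p_1$ in the main term, while the two exceptional primes $p,p_1$ now inflate the error to $\log(pp_1)^{t+1}$. The $V_n$ case with $p>2$ is parallel: $V_n/V_{n/p^{t+1}}$ is an integer because $p^{t+1}$ is odd (via $x^{p^{t+1}}+y^{p^{t+1}}=(x+y)\cdot Q(x,y)$ with $Q\in\Z[x,y]$), the analogous identity involves $(1+x^\bullet)$ in place of $(1-x^\bullet)$, and the cyclotomic factorization $V_n=\prod_{d\mid 2n,\,d\nmid n}\Phi_d(\al,\be)$ together with the same rank-of-apparition analysis produces \eqref{n_0V}. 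The main technical obstacle is the careful bookkeeping of non-primitive contributions—ensuring $v_p(U_{z(p)})=1$ (the Wall--Sun--Sun issue), controlling primes dividing $r^2+4s$, and avoiding the exceptional $n_0$ of the Bilu--Hanrot--Voutier refinement of Zsygmondy's theorem—precisely the cases $n_0\le 4$ and $n_0\in\{6,12\}$ excluded from the hypotheses.
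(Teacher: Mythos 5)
Your reduction is the right one: the lemma for $n_0=p^h$ is exactly the statement $M_{n_0}(U_n)\ge \log|R|-(t+1)\log p$ with $R:=U_n/U_{n/p^{t+1}}$, and your computation of $\log|R|$ is correct. But the way you propose to bound the non-conforming part of $R$ diverges from the paper in a way that creates real gaps. The paper passes to the Lucas sequence with roots $(\al_1,\be_1)=(\al^{n/m},\be^{n/m})$, $m=n_0p^t$, notes that $\prod_{i=0}^{t}\Phi_{n_0p^i}(\al_1,\be_1)$ telescopes to exactly $R$, and applies \cite[Theorem 2.4]{BHV} to each of these $t+1$ factors: for $\ell>4$, $\ell\notin\{6,12\}$ the primitive part of $U^1_\ell$ is $\Phi_\ell(\al_1,\be_1)/\delta_\ell$ with $\delta_\ell\in\{1,2,P(\ell)\}$. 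Since $P(n_0p^i)=P(n_0)$, the entire intrinsic loss is $\le (t+1)\log P(n_0)$ in one stroke, with no rank-of-apparition or lifting-the-exponent analysis and no Wall--Sun--Sun caveat. Your version keeps the roots $(\al,\be)$, so $R=\prod_d\Phi_d(\al,\be)$ runs over \emph{all} $d\mid n$ with $\nu_p(d)\ge\nu_p(n)-t$ (potentially very many indices), and you must control the intrinsic prime of each $\Phi_d$ separately.

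That is where the gaps are. (i) The classification you invoke is misstated: the non-primitive prime of $\Phi_d(\al,\be)$ is $q=P(d)$ with $d/z(q)$ a power of $q$, not ``$q=p$ with $d/z(q)$ a power of $p$''. Your conclusion survives (for $q\ne p$ with $q\nmid r^2+4s$ one still gets $p^h\mid z(q)$, hence $q\equiv\pm1\pmod{n_0}$), but the step as written is false. (ii) The error budget is exactly $(t+1)\log p$ with no slack, so ``primes dividing $r^2+4s$ contribute negligibly'' is not an admissible conclusion; you must show they contribute nothing. (This is true: such $q\ne p$ has $z(q)=q$, so it divides $\Phi_d(\al,\be)$ only for $d$ a power of $q$, which never lies in your index set -- but it has to be proved, not waved at.) (iii) Your bound $\nu_p(R)\le t+1$ genuinely needs $\nu_p(U_{z(p)})\le 1$ in the sub-case $z(p)\mid n$, $z(p)\nmid n/p^{t+1}$; you flag this as ``the Wall--Sun--Sun issue'' and leave it open, but it must be closed. (It can be: that sub-case forces $\nu_p(z(p))\ge h$, hence $p\mid r^2+4s$, $h=1$, $p\ge 5$, where $\nu_p(U_p)=1$; the case $p=2$ needs its own argument via $\nu_2(V_{2k})\le 1$.) (iv) For $n_0=p^hp_1^{h_1}$ with $h_1>1$, restricting to $p_1\mid d$ does not give $n_0\mid d$, so primitive divisors of those $\Phi_d$ need not be $\pm1\bmod p_1^{h_1}$; you must take $\nu_{p_1}(d)\ge h_1$ and then redo both the main term and the $p_1$-adic error. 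None of these is unfixable, but each is a missing step, and collectively they are precisely what the paper's detour through $(\al_1,\be_1)$ and the Bilu--Hanrot--Voutier theorem is designed to avoid.
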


\begin{proof}
Let $n_0$ be the divisor of $n$ given in the statement of the lemma. Let  $m=n_0p^t$. Write
$$U_n=\frac{\al^n-\be^n}{\al-\be}=\left(\frac{(\al^{n/m})^{m}-(\be^{n/m})^{m}}
{\al^{n/m}-\be^{n/m}}\right)\left( \frac{\al^{n/m}-\be^{n/m}}{\al-\be}\right).
$$
Let  $\al_1:=\al^{n/m}$ and $\be_1:=\be^{n/m}$ and put 
$$
U^{1}_{\ell}=\frac{\al^{\ell}_1-\be^{\ell}_1}{\al_1-\be_1}  \quad {\rm and} \quad 
V^{1}_{\ell}=\al^{\ell}_1+\be^{\ell}_1 \qquad {\rm for} \quad \ell\geq 1.
$$ 
Then $\{U^{1}_{\ell}\}_{\ell\geq 0}$ and$\{V^{1}_{\ell}\}_{\ell\geq 0}$  are the Lucas sequences 
with parameters $(r_1, s_1)$, where $(r_1, s_1)=(\al_1+\be_1, -\al_1\be_1)=(V_{n/m}, (-1)^{n/m-1}s^{n/m})$.  Further, we have 
 $U_n=U^1_{m}U_{n/m}$ and $V_n=V^1_{m}$ implying 
 $$M_{n_0}(U_n)\geq M_{n_0}(U^1_{m}) \qquad {\rm and} \qquad  M_{n_0}(V_n)\geq M_{n_0}(V^1_{m}).$$ 
Observe that $U^1_{m}=U^1_{n_0p^t}$ is divisible by each 
$U^1_{n_0p^i}, 0\leq i\leq t$. Recall that a prime $q\mid U^1_{\ell}$ is a 
primitive divisor of $U^1_{\ell}$ if $q\nmid U^1_{\ell'}$ for $\ell'<\ell$ and $q\nmid r^2_1+4s_1$. 
Also the primitive divisors of $U^1_{\ell}$ are all congruent to one of $ \pm 1$ modulo $\ell$. Hence, 
the primitive divisors of  $U^1_{n_0p^i}$ for $0\leq i\leq t$ are all congruent to one of 
$\pm 1$ modulo $n_0$. We now look at the primitive part of $U^1_{\ell}$. This is the part of 
$U^1_{\ell}$  built up only with powers of primitive prime divisors of $U^{1}_{\ell}$.  Thus, 
the primitive parts of $U^1_{n_0p^i}$ for $0\leq i\leq t$ divide $U^1_{m}$. Hence, 
$$M_{n_0}(U_n)\geq M_{n_0}(U^1_{m})\geq M_{n_0}\left(\prod^t_{i=0}U^1_{n_0p^i}\right).$$

For a positive integer $\ell$, let 
\begin{align*}
\Phi_\ell(\alpha_1, \beta_1):=\prod_{\substack{1\le k\le \ell \\ (k, \ell)=1}} (\alpha_1-e^{2\pi i k/\ell} \beta_1)
\end{align*}
be the specialisation of the homogenization $\Phi_\ell(X,Y)$ of the $\ell$-th cyclotomic polynomial $\Phi_\ell(X)$ in 
the pair $(\alpha_1, \beta_1)$. Further, it is well-known (see, for example,  \cite[Theorem 2.4]{BHV}), that 
for $\ell>4, \ell\notin \{6, 12\}$, 
$$
\prod_{\substack{p^{\nu_p}\| U^1_{\ell}\\ p~{\rm primitive}}} p^{\nu_p}=\frac{\Phi_{\ell}(\alpha_1,\beta_1)}{\delta_\ell},
$$
where $\delta_\ell\in \{1,2, P(\ell)\}$. Since primitive divisors of 
$U^1_{\ell}$ are congruent to one of $\pm 1$ modulo $\ell$, we obtain by taking $\ell=n_0p^i$ for $0\leq i\leq t$ that 
\begin{align}\label{MUn}
\begin{split}
M_{n_0}(U_n)\geq M_{n_0}\left(\prod^t_{i=0}U^1_{n_0p^i}\right)\geq 
\left(\prod^t_{i=0}|\Phi_{n_0p^i}(\alpha_1, \beta_1)|\right)(P(n_0))^{-t-1}.
\end{split}
\end{align}
Also from the fact that $V_n=V^1_{n_0p^t}$ is divisible by each $V_{n_0p^i}, 0\leq i\leq t$ (here $n_0, p$ are both odd) 
and the primitive part of $V_{n_0p^i}$ is exactly the primitive part of $U^1_{2n_0p^i}$, we obtain  similarly 
\begin{align}\label{MVn}
M_{n_0}(V_n)\geq 
M_{2n_0}\left(\prod^t_{i=0}U^1_{2n_0p^i}\right)\geq \left(\prod^t_{i=0}|\Phi_{2n_0p^i}(\alpha_1, \beta_1)|\right)
(P(n_0))^{-t-1}.
\end{align}
Therefore, it remains to estimate the right--hand sides of inequalities \eqref{MUn} and \eqref{MVn}.  

It is well-known that for a positive integer $\ell$, 
\begin{align*}
\Phi_\ell(\alpha_1, \beta_1) = \prod_{d\mid \ell} (\alpha^{\frac{\ell}{d}}_1-\beta^{\frac{\ell}{d}}_1)^{\mu(d)}.
\end{align*}
Hence, we have, by using $\al^{n_0p^t}_1=\al^n$,
\begin{align}\label{MU1}
\begin{split}
\prod^t_{i=0}\Phi_{n_0p^i}(\alpha_1, \beta_1)=&\prod^t_{i=0}\frac{\al^{p^{h+i}}_1-\be^{p^{h+i}}_1}
{\al^{p^{h+i-1}}_1-\be^{p^{h+i-1}}_1}
=\frac{\al^{p^{h+t}}_1-\be^{p^{h+t}}_1}{\al^{p^{h-1}}_1-\be^{p^{h-1}}_1}\\
=&\frac{\al^n-\be^n}{\al^{n/p^{t+1}}-\be^{n/p^{t+1}}}, \quad n_0=p^h;
\end{split}
\end{align}
and 
\begin{align}\label{MU2}
\begin{split}
\prod^t_{i=0}\Phi_{n_0p^i}(\alpha_1,\beta_1)=&\prod^t_{i=0}\frac{(\al^{p^{h+i}p^{h_1}_1}_1-\be^{p^{h+i}p^{h_1}_1}_1)
(\al^{p^{h+i-1}p^{h_1-1}_1}_1-\be^{p^{h+i-1}p^{h_1-1}_1}_1)}
{(\al^{p^{h+i-1}p^{h_1}_1}_1-\be^{p^{h+i-1}p^{h_1}_1}_1)(\al^{p^{h+i}p^{h_1-1}_1}_1-\be^{p^{h+i}p^{h_1-1}_1}_1)}\\
=&\frac{(\al^{p^{h+t}p^{h_1}_1}_1-\be^{p^{h+t}p^{h_1}_1}_1)(\al^{p^{h-1}p^{h_1-1}_1}_1-\be^{p^{h-1}p^{h_1-1}_1}_1)}{
(\al^{p^{h-1}p^{h_1}_1}_1-\be^{p^{h-1}p^{h_1}_1}_1)(\al^{p^{h+t}p^{h_1-1}_1}_1-\be^{p^{h+t}p^{h_1-1}_1}_1)} \\
=&\left(\frac{\al^n-\be^n}{\al^{\frac{n}{p^{t+1}}}-\be^{\frac{n}{p^{t+1}}}}\right)\left(
\frac{\al^{\frac{n}{p_1p^{t+1}}}-\be^{\frac{n}{p_1p^{t+1}}}}{\al^{n/p_1}-\be^{n/p_1}}\right),
 \quad  n_0=p^hp^{h_1}_1. 
\end{split}
\end{align}
Also,
\begin{align}\label{MV1}
\prod^t_{i=0}\Phi_{2n_0p^i}(\alpha_1,\beta_1)
=\frac{\al^{p^{h+t}}_1+\be^{p^{h+t}}_1}{\al^{p^{h-1}}_1+\be^{p^{h-1}}_1}
=\frac{\al^n+\be^n}{\al^{\frac{n}{p^{t+1}}}+\be^{\frac{n}{p^{t+1}}}},\quad n_0=p^h.
\end{align}

From $|\al|\geq |\be|$, we have $|x|\leq 1$. Taking out the powers of $\al$ in 
\eqref{MU1}--\eqref{MV1} and further using in \eqref{MU2} the inequality
\begin{align*}
\left|\frac{1-y}{1-y^{p^{t+1}}}\right|\geq \frac{1}{p^{t+1}} \quad {\text{\rm valid~for~all}}\quad p, \quad {\text{\rm {\rm where}}} \quad 
y:=x^{\frac{n}{p_1p^{t+1}}}\quad {\text{\rm has}}\quad |y|\leq 1, 
\end{align*}
we get the assertions \eqref{n_0U} and \eqref{n_0V} from \eqref{MUn} and \eqref{MVn}, respectively.  
\end{proof}

From  the inequality
$$44.72(\log t+2.36)^2+0.16\log ^2t\leq 44.88\log^2t+211.08\log t+249.08,$$
we obtain the following result which is \cite[Lemma 4]{LaLuSi} and which is a consequence of Voutier \cite[Lemma 5]{Vou}.  

\begin{lemma}\label{-44.72}
Let $\al$ and $\be$ be complex conjugates with $\log |\al|>4$. Let 
\begin{align}\label{f(x)}
f(\ell):=44.88\log^2 \ell+211.08\log \ell+249.08 \quad {\rm for} \quad \ell>1.
\end{align}
Then for integer $\ell\ge 3$, we have 
\begin{equation}
\label{a-b}
\log |\alpha^\ell-\beta^\ell| \ge  \log |\alpha|\left(\ell-f\left(\frac{\ell}{\gcd(\ell, 2)}\right)\right)
\end{equation}
and 
\begin{align}\label{a+b}
\log |\alpha^\ell+\beta^\ell|\ge \log |\alpha|\left(\ell-f(\ell)\right).
\end{align}
\end{lemma}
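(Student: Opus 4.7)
I would prove this by unwrapping Voutier's Lemma 5 from \cite{Vou}, a sharp linear-forms-in-two-logarithms estimate tailored to the quantities $\alpha^\ell\pm\beta^\ell$ when $\alpha,\beta$ are complex conjugates. Writing $\alpha=|\alpha|e^{i\phi}$ and $\beta=|\alpha|e^{-i\phi}$, one has $|\alpha^\ell-\beta^\ell|=2|\alpha|^\ell|\sin(\ell\phi)|$ and $|\alpha^\ell+\beta^\ell|=2|\alpha|^\ell|\cos(\ell\phi)|$, so the problem reduces to lower-bounding $|\sin(\ell\phi)|$ or $|\cos(\ell\phi)|$, that is, to controlling how close $\ell\phi$ can come to a half-integer multiple of $\pi$. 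Voutier's estimate is built for exactly this, and under the hypothesis $\log|\alpha|>4$ yields, in the minus case,
\begin{equation*}
\log|\alpha^\ell-\beta^\ell|\ge \log|\alpha|\,\ell - \bigl(44.72(\log L+2.36)^2+0.16\log^2 L\bigr)\log|\alpha|,
\end{equation*}
with $L=\ell/\gcd(\ell,2)$, and the analogous estimate for $\alpha^\ell+\beta^\ell$ with $L=\ell$. The hypothesis $\log|\alpha|>4$ enters through the residual $0.16\log^2 L$, which absorbs a term naturally of order $\log^2 L/\log|\alpha|$.

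Next I would invoke the elementary numerical inequality displayed just above the lemma statement, with $t=L$, to bound the parenthesized error term by $f(L)$. Expanding $(\log t+2.36)^2$, its left-hand side equals $44.88\log^2 t+211.0784\log t+249.07\ldots$, which is dominated term by term by $f(t)=44.88\log^2 t+211.08\log t+249.08$. Plugging this into the Voutier estimate yields $\log|\alpha^\ell-\beta^\ell|\ge \log|\alpha|(\ell-f(L))$ and $\log|\alpha^\ell+\beta^\ell|\ge \log|\alpha|(\ell-f(L))$ in the two cases, which are precisely \eqref{a-b} and \eqref{a+b}.

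The only substantive step is extracting Voutier's Lemma 5 in the correct numerical form; the rest is a routine simplification. In fact the present assertion is literally \cite[Lemma 4]{LaLuSi}, so the proof reduces to citing that lemma together with the displayed elementary inequality, which converts Voutier's raw estimate into the cleaner shape $\ell-f(L)$.
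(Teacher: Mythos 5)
Your proposal is correct and follows exactly the paper's route: the authors likewise obtain the lemma by citing \cite[Lemma 4]{LaLuSi} (itself a consequence of Voutier's \cite[Lemma 5]{Vou}) and then applying the displayed numerical inequality $44.72(\log t+2.36)^2+0.16\log^2 t\leq 44.88\log^2 t+211.08\log t+249.08$, whose expansion you verify correctly. Your additional remarks on reducing to lower bounds for $|\sin(\ell\phi)|$ and $|\cos(\ell\phi)|$ are accurate context but not needed beyond the citation.
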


The following lemma gives us range for the parameters $(r, s)$ in case when $\al$ is real, positive and lies in an interval $[c_1,c_2]$. 

\begin{lemma}\label{rsBD}
Let $\al, ~\be$  be real. Assume $\alpha>0$. Let $c_1\leq \al\leq c_2$ where $c_1, c_2$ are positive reals and $r^2+4s>0$. 
For $s>0$, we have 
$r<c_2$ and 
\begin{align*}
\max\left\{c_1(c_1-r), \frac{c^2_1-r^2}{4}\right\}\leq s\leq c_2(r-c_2). 
\end{align*}
For $s<0$, we have $c_1\leq r\leq 2c_2$ and 
\begin{align*}
c_2(r-c_2)\leq |s|<\frac{r^2}{4}, \quad {and \ further} \quad 
|s|<c_1(r-c_1) \quad {if}  \quad r<2c_1.
\end{align*}
\end{lemma}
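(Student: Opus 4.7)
The plan is to translate the hypothesis $c_1\le\al\le c_2$ together with $\al\ge|\be|$ into conditions on the Vieta data $r=\al+\be$ and $-s=\al\be$, using the explicit root formulas $\al=(r+\sqrt{r^2+4s})/2$ and $\be=(r-\sqrt{r^2+4s})/2$. I would split on the sign of $s$, which is the same as the sign of $-\be$ since $\al>0$.

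In the case $s>0$, $\be$ is negative, so writing $\be=r-\al$ the inequality $\be<0$ combined with $\al\le c_2$ yields $r<\al\le c_2$, hence $r<c_2$. For the bounds on $s$ I would rearrange the endpoint bounds on $\al$ into bounds on $\sqrt{r^2+4s}$ and square: $\al\le c_2$ gives $\sqrt{r^2+4s}\le 2c_2-r$, and squaring (both sides nonnegative since $r<c_2\le 2c_2$) produces the claimed upper bound on $s$; $\al\ge c_1$ gives $\sqrt{r^2+4s}\ge 2c_1-r$, which produces $s\ge c_1(c_1-r)$ when $r<2c_1$ and is vacuous otherwise. For the alternative lower bound $s\ge(c_1^2-r^2)/4$, I would use that $\al-\be=\sqrt{r^2+4s}$ together with $\be\le 0$ and $\al\ge c_1$ to get $\al-\be\ge\al\ge c_1$, so squaring yields $r^2+4s\ge c_1^2$.

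In the case $s<0$, $\be$ is positive, so $r=\al+\be>\al\ge c_1$, and $\al\ge\be$ combined with $r=\al+\be$ gives $r\le 2\al\le 2c_2$, so $c_1\le r\le 2c_2$. Writing $g(\al):=\al(r-\al)=|s|$, the function $g$ is concave with maximum value $r^2/4$ attained only at $\al=r/2$, and the condition $\al\ne\be$ (which holds since $\al/\be$ is not a root of unity) forces $\al>r/2$, yielding $|s|<r^2/4$. On the decreasing branch $(r/2,r)$ of $g$, the bound $\al\le c_2$ (when $c_2<r$) gives $|s|=g(\al)\ge g(c_2)=c_2(r-c_2)$, and the bound is vacuous when $c_2\ge r$; when $r<2c_1$ one has $c_1>r/2$, so $\al\in[c_1,r)$ lies on this same decreasing branch and $|s|=g(\al)\le g(c_1)=c_1(r-c_1)$.

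The argument is essentially routine algebra; there is no conceptual obstacle. The main work is bookkeeping: for each stated inequality one must check whether it has genuine content in a given subcase or is vacuously true because its right-hand side has the opposite sign from the quantity being bounded, and one must correctly trace the strict versus non-strict status of each inequality back to its source (the strict inequality $\al>r/2$ originating from $\al\ne\be$, versus the non-strict endpoint conditions $\al\in[c_1,c_2]$).
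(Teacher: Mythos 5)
Your argument is correct and is essentially the paper's own proof: both reduce to the same elementary inequalities via the root formulas (the paper squares $2\al=r+\sqrt{r^2+4s}$ directly, you phrase the $s<0$ case through the concave map $\al\mapsto\al(r-\al)=|s|$, but the content is identical), and your bookkeeping of which bounds are vacuous matches what the paper leaves implicit. Note that what both you and the paper actually establish for $s>0$ is $s\le c_2(c_2-r)$; the bound $c_2(r-c_2)$ displayed in the lemma is a sign typo, since it is negative once $r<c_2$.
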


\begin{proof}
We have $2c_1\leq 2\al=r+\sqrt{r^2+4s}\leq 2c_2$. This gives the inequality
$r^2+4s\leq (2c_2-r)^2$ implying $s\leq c_2(c_2-r)$. If $2c_1>r$, we then have $r^2+4s\geq (2c_1-r)^2$ 
giving $s\geq c_1(c_1-r)$. 

Let $s>0$. Then $r<\al\leq c_2$ giving $r<c_2$ and $s\leq c_2(c_2-r)$. If $c_1>r$, then $2c_1>r$ and therefore 
 $s\geq c_1(c_1-r)$. Also 
 $$
 2c_1\leq r+\sqrt{r^2+4s}\leq 2\sqrt{r^2+4s}
 $$ 
 gives 
${\displaystyle{s\geq \frac{c^2_1-r^2}{4}}}$ implying 
$$
s\geq \max\left\{c_1(c_1-r), \frac{c^2_1-r^2}{4}\right\}.
$$
Let $s<0$. Then $c_1\leq \al<r<r+\sqrt{r^2+4s}\leq 2c_2$ giving $c_1<r<2c_2$. Also $r^2+4s>0$ gives 
$|s|=-s<{r^2}/{4}$. From $s\leq c_2(c_2-r)$, we get 
$$
|s|=-s\geq c_2(r-c_2).
$$ 
If $r<2c_1$, then 
$s\geq c_1(c_1-r)$ implying $|s|=-s\leq c_1(r-c_1)$. 
\end{proof}

The following lemma is proved using Stirling's formula. 

\begin{lemma}\label{m14} The function
$m\mapsto {\log(C_m/2)}/{m}$ is increasing  for $m\geq 7$. Hence, 
\begin{align}\label{m1.3}
\log \left(\frac{B_m}{2}\right)>\log \left(\frac{C_m}{2}\right)>\begin{cases}
m \quad &{\rm for} \quad  m\geq 14;\\
1.36m \quad &{\rm for} \quad  m\geq 400;\\
1.38m &{\rm for} \quad  m\geq 2100.
\end{cases}
\end{align}
Further, given $M\geq 7$ and $m\leq M$, we have 
\begin{align}\label{0001}
\frac{m\log 2m}{\log (C_m/2)}\frac{\log (C_M/2)}{M}\leq 1.0001 \log 2M.
\end{align}

\end{lemma}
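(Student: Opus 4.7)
Every claim in the lemma is a statement about the growth rate of Catalan numbers, so my plan is to base everything on the explicit two-sided Stirling bound
\[
\sqrt{2\pi n}\,(n/e)^n \exp\!\left(\frac{1}{12n+1}\right) \le n! \le \sqrt{2\pi n}\,(n/e)^n \exp\!\left(\frac{1}{12n}\right).
\]
Substituting into $\binom{2m}{m} = (2m)!/(m!)^2$ and hence into $C_m = \binom{2m}{m}/(m+1)$ gives the expansion
\[
\log(C_m/2) = m\log 4 - \frac{3}{2}\log m - \frac{1}{2}\log\pi - \log 2 + \eta_m,
\]
where $\eta_m$ carries an explicit $O(1/m)$ bound coming directly from the exponential factors. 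This single expansion, combined with direct SageMath evaluation for small $m$, drives every step.

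\textbf{Monotonicity and the explicit lower bounds.} For the monotonicity of $g(m):=\log(C_m/2)/m$ on $m \ge 7$, I would use the standard recursion $C_{m+1}/C_m = 2(2m+1)/(m+2)$ to rewrite
\[
g(m+1) - g(m) = \frac{m\log\!\left(\frac{2(2m+1)}{m+2}\right) - \log(C_m/2)}{m(m+1)},
\]
and show that the numerator is positive. I would verify this directly for $m$ up to a moderate threshold and invoke the Stirling expansion on the tail, where $m\log(2(2m+1)/(m+2)) = m\log 4 + O(1)$ while $\log(C_m/2) = m\log 4 - \frac{3}{2}\log m + O(1)$, so the difference is bounded below by $\frac{3}{2}\log m - O(1)$, which is eventually positive. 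The three explicit lower bounds $\log(C_m/2) > m,\, 1.36m,\, 1.38m$ then follow by numerically checking $g$ at $m = 14, 400, 2100$ and appealing to the monotonicity. The inequality $\log(B_m/2) > \log(C_m/2)$ is trivial from $B_m = (m+1)C_m$.

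\textbf{Inequality \eqref{0001}.} After rearrangement, this asserts that the auxiliary function $\phi(m) := \log(C_m/2)/(m\log 2m)$ satisfies $\phi(M) \le 1.0001\,\phi(m)$ whenever $m \le M$ and $M \ge 7$. Inserting the Stirling expansion yields
\[
\phi(m) = \frac{\log 4}{\log 2m}\left(1 - \frac{3\log m + O(1)}{2m\log 4}\right),
\]
so $\phi$ becomes strictly decreasing once $m$ exceeds a modest threshold $m_1$, after which \eqref{0001} is immediate and the $1.0001$ slack is nowhere near binding. For the remaining finite range of pairs $(m,M)$ with $m \le m_1$, I would dispatch the inequality by a direct finite computation, using exact Catalan values on the $m$-side and either exact values or the tight Stirling expansion on the $M$-side.

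\textbf{Main obstacle.} Everything is routine once the Stirling expansion is in hand, with one genuinely delicate point: the constant $1.0001$ in \eqref{0001} is only a thousandth of a percent larger than the unslackened bound, so the error term $\eta_m$ must be controlled very precisely for the small and moderate $m$ where $\phi$ is not yet in its monotone-decreasing regime. This is where the two-sided form of the Stirling bound (rather than just the leading asymptotic) is essential, and where the bulk of the numerical verification will live.
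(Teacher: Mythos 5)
Your handling of the first two assertions is essentially the paper's own argument: the same two\-/sided Stirling bound, the same recursion $C_{m+1}/C_m=(4m+2)/(m+2)$, and the same identity for $g(m+1)-g(m)$ whose numerator is $m\log(C_{m+1}/C_m)-\log(C_m/2)$; the paper makes your ``$O(1)$'' explicit by checking $\frac{4^m}{\sqrt{\pi m}}\bigl(\frac{m+2}{4m+2}\bigr)^m<1$ for $m\ge 7$, which settles positivity for \emph{all} $m\ge 7$ in one stroke and removes the need for a separate moderate-range verification. The three bounds in \eqref{m1.3} then follow exactly as you say. No issues there.

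The gap is in \eqref{0001}, and it is not a gap you could have closed. Your reduction to $\phi(M)\le 1.0001\,\phi(m)$, i.e.\ to $F(m)\le 1.0001\,F(M)$ with $F(m):=m\log(2m)/\log(C_m/2)$, is correct, and $F$ is indeed eventually increasing. But $F$ is \emph{not} monotone past $7$: it decreases from $F(7)\approx 3.4412$ to a minimum $F(12)\approx 3.3012$ and does not climb back above $F(7)$ until about $m=26$. That dip is roughly $4\%$, which swamps the $10^{-4}$ slack, so the ``direct finite computation'' you defer to would not close the argument --- it would produce counterexamples. Concretely, for $(m,M)=(7,12)$ the left side of \eqref{0001} is $\frac{7\log 14}{\log(429/2)}\cdot\frac{\log(104006)}{12}\approx 3.313$ while the right side is $1.0001\log 24\approx 3.178$; the inequality, and hence the lemma as stated, fails for many pairs with $7\le m<M\le 25$. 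You are not missing an idea from the paper: its proof breaks at exactly the same spot, where it asserts that $\log(2m)/(\log 4-\sigma_m/m)$ is increasing in $m$ --- that majorant of $F$ has the same dip (about $3.441$ at $m=7$ versus $3.301$ at $m=12$). Any repair needs either a larger constant than $1.0001$ or a restriction on the range of $(m,M)$ (e.g.\ both large, as in the paper's application where $M\ge 686$ --- though even there small $j$, in particular $j=2$ with $\log(C_2/2)=0$, must be excluded separately), and the repaired constant then has to be threaded back through the proof of Lemma \ref{MnBD}.
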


\begin{proof}
We recall Stirling's formula. For a positive integer $\nu $, we have 
\begin{align*}
\sqrt{2\pi \nu}~e^{-\nu}\nu ^{\nu }e^{\frac{1}{12\nu+1}} <\nu !< 
\sqrt{2\pi \nu}~e^{-\nu} \nu ^{\nu } e^{\frac{1}{12\nu}}.
\end{align*}
From $C_m=\frac{(2m)!}{(m+1)(m!)^2}$,  we have 
\begin{align}\label{<cm<}
m\log 4 -\sigma_m<\log (C_m/2)<m\log 4 -\tau_m,
\end{align}
where 
\begin{align*}
&\sigma_m:=\log 2+\log (m+1)+\log \sqrt{\pi m}+\frac{1}{6m}-\frac{1}{24m+1} \\
{\rm and} \quad & \tau_m:=\log 2+\log (m+1)+\log \sqrt{\pi m}+\frac{2}{12m+1} -\frac{1}{24m}.
\end{align*}
We have $C_m<{4^m}/{\sqrt{\pi m}}$ and 
\begin{align*}
\frac{4^m}{\sqrt{\pi m}}\left(\frac{m+2}{4m+2}\right)^m=\frac{\left(1+{3}/{(2m+1)}\right)^m}{\sqrt{\pi m}}
\leq \frac{e^{\frac{3m}{2m+1}}}{\sqrt{\pi m}}<\frac{e^{3/2}}{\sqrt{\pi m}}<1 \quad {\rm for} \quad m\geq 7.
\end{align*}
Hence, from $C_{m+1}/C_m={(4m+2)}/{(m+2)}$, we get 
\begin{align*}
m\log \left(\frac{C_{m+1}}{2}\right)-(m+1)\log \left(\frac{C_{m}}{2}\right)\geq m\log \left(\frac{C_{m+1}}{C_m}\right)-\log C_m>0 
\end{align*}
for $m\ge 7$. This shows that ${\log(C_m/2)}/{m}$ is an increasing function for $m\geq 7$.  Hence, the assertion \eqref{m1.3}  follows by 
calculating ${\log(C_m/2)}/{m}$ at $m=14, 400, 2100$, respectively.   

From \eqref{<cm<}, we have 
\begin{align*}
\frac{m\log 2m}{\log ({C_m}/{2})}\leq \frac{\log 2m}{\log 4-{\sigma_m}/{m}}
\end{align*}
and the right--hand side is an increasing function of $m$. Therefore, 
from $m\leq M$ and inequality \eqref{<cm<} again, we get 
\begin{align*}
\left(\frac{m\log 2m}{\log (\frac{C_m}{2})}\right)\left(\frac{\log ({C_M}/{2})}{M}\right)&\leq \frac{\log 2M}{\log 4-{\sigma_M}/{M}}(\log 4-{\tau_M}/{M})\\
&=(\log 2M)\left(1+\frac{\sigma_M-\tau_M}{M\log 4-\sigma_M}\right)\\
&\leq (\log 2M)\left(1+\frac{\frac{1}{24M+1}+\frac{4}{12M+1}}{24M(M\log 4-\sigma_M)}\right)\\
&\leq 1.0001\log 2M,
\end{align*}
since $M\geq 7$, implying the assertion \eqref{0001}. 
\end{proof}

The next lemma follows easily from the Brun-Titchmarsh inequality given by Montgomery and 
Vaughan \cite[Theorem 2]{MV} since $\pi(1;q,l)=0$ and $\pi(y; q,l)\leq \pi(y+1; q,l)-\pi(1; q, l)$. 
Recall that $\pi(y; q, l)$ stands for the number of primes $p\leq y$ and $p\equiv l\pmod q$. 

\begin{lemma}\label{pix}
Let $q$ be a positive integer,  $l$ be coprime to $q$ and $y>q$. Then
\begin{align*}
\pi(y;q,l)\le \frac{2y}{\varphi(q)\log (y/q)} \quad {\rm and} \quad \pi(2y;q,l)-\pi(y;q,l)\le \frac{2y}{\varphi(q)\log (y/q)}.
\end{align*}
\end{lemma}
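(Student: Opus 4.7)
The plan is to directly invoke the Brun--Titchmarsh inequality of Montgomery and Vaughan, which in its standard form states that for $\gcd(l,q)=1$ and any real $x\ge 0$ and $z>q$, one has
\begin{equation*}
\pi(x+z;q,l)-\pi(x;q,l)\le \frac{2z}{\varphi(q)\log(z/q)}.
\end{equation*}
The hypothesis $y>q$ in the lemma is exactly the ``length exceeds modulus'' condition needed to apply this bound with length $z=y$.

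For the second inequality, I would simply apply Brun--Titchmarsh with $x=y$ and $z=y$: since $y>q$, the inequality yields
\begin{equation*}
\pi(2y;q,l)-\pi(y;q,l)\le \frac{2y}{\varphi(q)\log(y/q)},
\end{equation*}
which is the desired bound verbatim.

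For the first inequality, I would use the observations highlighted in the statement: $\pi(1;q,l)=0$ (since there are no primes $\le 1$) and the trivial monotonicity $\pi(y;q,l)\le \pi(y+1;q,l)$. Applying Brun--Titchmarsh with $x=1$ and length $z=y>q$, one obtains
\begin{equation*}
\pi(y+1;q,l)=\pi(y+1;q,l)-\pi(1;q,l)\le \frac{2y}{\varphi(q)\log(y/q)},
\end{equation*}
and combining with $\pi(y;q,l)\le\pi(y+1;q,l)$ gives the claimed estimate. There is no real obstacle here; the only point worth flagging is checking that the length parameter in the interval to which one applies Brun--Titchmarsh exceeds $q$ (which is precisely the hypothesis $y>q$), so that the denominator $\log(y/q)$ in the bound is positive and the inequality is actually usable.
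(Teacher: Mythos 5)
Your proposal is correct and is exactly the argument the paper intends: the authors state before the lemma that it "follows easily from the Brun--Titchmarsh inequality given by Montgomery and Vaughan since $\pi(1;q,l)=0$ and $\pi(y;q,l)\le\pi(y+1;q,l)-\pi(1;q,l)$," which is precisely your application with $(x,z)=(1,y)$ for the first bound and $(x,z)=(y,y)$ for the second. Nothing is missing.
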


As usual, let 
$$\psi(y; q, l):=\sum _{\substack{p^t\leq y \\ p\equiv l\pmod {m}}}\log p \quad {\rm and} \quad 
\theta(y; q, l):=\sum _{\substack{p\leq y \\ p\equiv l\pmod {m}}}\log p.$$
The following estimates are from \cite[Table 2]{RaRu}. We have taken into account the estimates 
for $\theta\#$ defined in \cite[Table 2]{RaRu} for $q\in \{8, 16, 24\}$. 

\begin{lemma}\label{pi<24}
Let $q\in \{8, 9, 12, 16, 24\}$ or $5\leq q\leq 23$ be a prime and  $\ell_0$ be an integer coprime to $q$ with 
$\ell_0 \not\equiv 1\pmod q$. Then for $y\geq q$, we have 
\begin{align}
\begin{split}
&\psi(y; q, 1)+\psi(y; q, \ell_0)\leq  \frac{2y}{\varphi(q)}\left(1+\frac{\ep_\psi \varphi(q)}{\sqrt{y}}\right)
\end{split}
\end{align}
and 
\begin{align}
\begin{split}
\theta(y; q, 1)+\theta(y; q, \ell_0)&\geq \frac{2y}{\varphi(q)}\left(1-\frac{\ep_\theta\varphi(q)}{\sqrt{y}}\right),
\end{split}
\end{align}
where $\ep_\psi$ and $\ep_\theta$ are given by 
\begin{center}
\begin{tabular}{|c||c|c|c|c|c|c|c|c|c|c|} \hline
$q$ & $5$ & $7$ & $8$ & $9$ & $12$ & $16$ & $24$ & $11\leq q\leq 23$  \\ \hline \hline
$\ep_\psi$ & $.807$ & $.78$ & $.927$ & $.789$ & $.863$ & $.774$ & $.745$ & $.912$  \\ \hline \hline
$\ep_\theta$ & $1.413$ & $1.106$ & $1.5$ & $1.11$ & $1.5$ & $1.03$ & $1.5$ & $1.1$   \\ \hline \hline 
\end{tabular}
\end{center}
Further,
\begin{align*}
\frac{y}{\varphi(24)}\left(1-\frac{\varphi(24)}{\sqrt{y}}\right) \leq 
\theta(y; 24, 5)\leq \psi(y; 24, 5)\leq \frac{y}{\varphi(q)}\left(1+\frac{0.745\varphi(24)}{\sqrt{y}}\right). 
\end{align*}

\end{lemma}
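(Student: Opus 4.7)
The plan is to extract both inequalities directly from the explicit single-residue bounds on $\theta(y;q,\ell)$ and $\psi(y;q,\ell)$ tabulated by Ramar\'e and Rumely. For every modulus $q$ listed in the statement and every residue $\ell$ coprime to $q$, one has uniform bounds of the shape
$$\left| \psi(y;q,\ell) - \frac{y}{\varphi(q)} \right| \leq c_\psi(q)\sqrt{y}, \qquad \left| \theta(y;q,\ell) - \frac{y}{\varphi(q)} \right| \leq c_\theta(q)\sqrt{y},$$
valid for $y$ in an appropriate range, in particular for $y \geq q$. These bounds are established in \cite{RaRu} by a combination of a finite verification on an initial segment, an explicit zero-free region for Dirichlet $L$-functions modulo $q$, and an explicit version of the Riemann--von Mangoldt explicit formula. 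I would take them as a black box.

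Given such uniform estimates, the pair inequalities follow by simple addition. Summing the bound for $\ell=1$ with the bound for $\ell=\ell_0$ produces
$$\psi(y;q,1) + \psi(y;q,\ell_0) \leq \frac{2y}{\varphi(q)} + 2c_\psi(q)\sqrt{y} = \frac{2y}{\varphi(q)}\left(1 + \frac{c_\psi(q)\varphi(q)}{\sqrt{y}}\right),$$
and analogously for $\theta$ with a minus sign. Identifying $\epsilon_\psi$ and $\epsilon_\theta$ with $c_\psi(q)$ and $c_\theta(q)$ and reading the numerical values off \cite[Table~2]{RaRu} reproduces the table displayed in the statement for the moduli $q\in\{5,7,9,12\}$ and for the primes $11\le q\le 23$ (where a uniform constant is chosen to cover all of them at once).

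For the moduli $q\in\{8,16,24\}$, the sharper constants displayed are not obtained by the naive factor-of-two pairing above but by appealing to the auxiliary function $\theta^\#$ of \cite{RaRu}, which is designed precisely to bound a sum over two residue classes and is strictly tighter than the single-residue $\theta$-estimate doubled. The authors' parenthetical remark about $\theta^\#$ is a signal that this refinement is being used for these three moduli. The final two-sided inequality for the single residue class $5\pmod{24}$ is then the ordinary one-residue version of the same tabulated estimates, combined with the trivial inequality $\theta(y;q,\ell)\le \psi(y;q,\ell)$.

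There is no substantive mathematical obstacle: the whole content of the lemma is a bookkeeping extraction of explicit constants from \cite[Table~2]{RaRu}. The only care required is to (i) distinguish clearly between pair-of-residues constants (from $\theta^\#$ for $q\in\{8,16,24\}$, and from the naive doubling otherwise) and single-residue constants, (ii) verify that the hypothesis $y\ge q$ places us within the range of validity tabulated in \cite{RaRu}, and (iii) transcribe the numerical values into the stated form $1\pm \epsilon\varphi(q)/\sqrt{y}$ without arithmetic slip.
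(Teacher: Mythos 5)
Your proposal matches the paper's treatment exactly: the authors give no argument beyond citing \cite[Table~2]{RaRu}, noting that the sharper constants for $q\in\{8,16,24\}$ come from the $\theta\#$ estimates there, and the pair inequalities are just the sum of two single-residue bounds of the form $|\psi(y;q,\ell)-y/\varphi(q)|\le \ep\sqrt{y}$ rewritten as $\frac{2y}{\varphi(q)}(1\pm\ep\varphi(q)/\sqrt{y})$. Your bookkeeping, including the single-residue two-sided bound for $\ell=5\pmod{24}$ via $\theta\le\psi$, is the intended proof.
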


As a consequence, we have the following result.

\begin{lemma}\label{pi<30}
Let $q\in \{8, 9, 12, 16, 24\}$ or $5\leq q\leq 23$ be a prime and  $\ell_0$ be an integer coprime to $q$ with 
$\ell_0 \not\equiv 1\pmod q$. Then for $y\geq 1500$, we have 
\begin{align}\label{m-+1}
\begin{split}
&\sum_{l=1, \ell_0}\left(\psi(2y; q, l)-\theta(y; q, l)+\theta\left(\frac{2y}{3}; q, l\right)-\theta\left(\frac{y}{2}; q, l\right)
+\theta\left(\frac{2y}{5}; q, l\right)\right)\\
\leq &\frac{y}{\varphi(q)}\left\{\frac{47}{15}+\frac{2\sqrt{2}\ep_\psi}{\sqrt{y}}\left(1+\frac{1}{\sqrt{3}}+\frac{1}{\sqrt{5}}\right)
+\frac{2\ep_\theta}{\sqrt{y}}\left(1+\frac{1}{\sqrt{2}}\right)\right\},
\end{split}
\end{align}
where $\ep_\psi$ and $\ep_\theta$ are given in Lemma \ref{pi<24}. 
Also for each $y\geq 15$, there is a prime $p\equiv 5\pmod {24}$ with  $y+1<p\leq 2y$. Further, for $y\geq 6$, there 
is a  prime $p\equiv \pm 5\pmod 8$ with $y+1<p\leq 2y$. And for $y\geq 9$, there 
is a  prime $p\equiv 5\pmod 12$ with $y+1<p\leq 2y$. 
\end{lemma}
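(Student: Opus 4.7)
The plan is to prove \eqref{m-+1} by applying Lemma \ref{pi<24} to each of the five inner summands and adding the resulting bounds, and to prove the three prime-existence assertions by the classical comparison: if $\theta(2y;q,\ell) > \psi(y;q,\ell)$, then a prime $p\equiv \ell\pmod q$ with $y<p\le 2y$ must exist.

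For the sum \eqref{m-+1} I would apply the $\psi$-upper bound of Lemma \ref{pi<24} directly to $\psi(2y;q,1)+\psi(2y;q,\ell_0)$, the $\theta$-lower bound (with signs flipped) to $-\theta(y;q,1)-\theta(y;q,\ell_0)$ and to $-\theta(y/2;q,1)-\theta(y/2;q,\ell_0)$, and for the two remaining terms at $2y/3$ and $2y/5$ use $\theta\le \psi$ followed by the $\psi$-upper bound. The main terms collapse as
$$\left(4-2+\tfrac{4}{3}-1+\tfrac{4}{5}\right)\frac{y}{\varphi(q)}=\frac{47}{15}\cdot\frac{y}{\varphi(q)},$$
matching the dominant part of the claimed right--hand side. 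The error contributions from the three $\psi$-upper bounds, after accounting for the scalings $2y$, $2y/3$, $2y/5$, combine into $\sqrt{y}\cdot\sqrt{2}\,\ep_\psi(1+1/\sqrt{3}+1/\sqrt{5})$, while the two $\theta$-lower bounds at $y$ and $y/2$ contribute $\sqrt{y}\cdot\ep_\theta(1+1/\sqrt{2})$. The hypothesis $y\ge 1500$ serves only to ensure every invocation of Lemma \ref{pi<24} (which requires $y\ge q$) is valid for all $q\le 24$ under consideration.

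For each of the three prime-existence claims the approach is the same: it suffices to show $\theta(2y;q,\ell)-\psi(y;q,\ell)>0$ for the stated residue(s). For $(q,\ell)=(24,5)$ the explicit inequality at the end of Lemma \ref{pi<24} yields a lower bound of the form $y/8-\sqrt{y}(\sqrt{2}+0.745)$, positive for $y$ above a modest threshold. The cases $(q,\ell)=(8,\pm 5)$ (summing the residues $3,5$) and $(q,\ell)=(12,5)$ are handled analogously via the two main inequalities of Lemma \ref{pi<24}. Each analytic bound gives a cutoff above which positivity holds; the remaining finite range down to the stated starting values $15$, $6$, $9$ is a routine SageMath verification, consistent with the computational setup mentioned in the introduction.

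The main obstacle I anticipate is careful bookkeeping: tracking how the $\varphi(q)$ factors inside and outside the brackets of Lemma \ref{pi<24} interact when the compact form $\frac{2y}{\varphi(q)}\bigl(1+\ep_\psi\varphi(q)/\sqrt{y}\bigr)$ is unfolded into a main piece plus an error piece and then re-folded across the five summands into the form displayed in \eqref{m-+1}. No new analytic input is needed beyond Lemma \ref{pi<24}; once the elementary arithmetic $4-2+4/3-1+4/5=47/15$ is in hand, both halves of the lemma reduce to estimation plus a finite computer check.
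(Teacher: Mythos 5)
Your treatment of inequality \eqref{m-+1} is exactly the paper's: the authors likewise obtain it by applying Lemma \ref{pi<24} to the $\psi(2y)$ and $-\theta(y)$, $-\theta(y/2)$ pairs, using $\theta\le\psi$ for the terms at $2y/3$ and $2y/5$, and collecting $4-2+\tfrac43-1+\tfrac45=\tfrac{47}{15}$; the role of $y\ge 1500$ is as you describe. (Do track the constants carefully when you execute this: as written your error coefficients are off by factors of $2$, and the placement of the $\varphi(q)$ on the error terms in \eqref{m-+1} versus in Lemma \ref{pi<24} is precisely the delicate point you flagged.)

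The genuine gap is in your plan for the residues $\pm 5\pmod 8$ and $5\pmod{12}$. You propose to handle these ``analogously via the two main inequalities of Lemma \ref{pi<24},'' but those inequalities only bound the \emph{joint} quantities $\theta(y;q,1)+\theta(y;q,\ell_0)$ and $\psi(y;q,1)+\psi(y;q,\ell_0)$; they give no lower bound for $\theta$ restricted to a single residue class other than the specific class $5\pmod{24}$ treated in the final display of Lemma \ref{pi<24}. Showing $\theta(2y;12,1)+\theta(2y;12,5)>\psi(y;12,1)+\psi(y;12,5)$ only produces a prime congruent to $1$ \emph{or} $5$ modulo $12$ in $(y,2y]$, which does not witness the claim; the same objection applies to the pair of classes $3,5\pmod 8$, for which no bound at all is stated. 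The paper sidesteps this entirely: it proves only the $5\pmod{24}$ assertion analytically (via the single-class inequality, valid for $y\ge 400$, plus a check for $15\le y<400$), and then observes that a prime $p\equiv 5\pmod{24}$ automatically satisfies $p\equiv 5\pmod 8$ and $p\equiv 5\pmod{12}$, so the mod $8$ and mod $12$ statements follow for $y\ge 15$ and only the short ranges $6\le y<15$ and $9\le y<15$ need direct verification. You should adopt this reduction (or extend your finite computation up to the mod-$24$ threshold), since the analytic step you describe for $q=8,12$ is not available from the stated lemma.
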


\begin{proof}
The assertion \eqref{m-+1} is immediate from Lemma \ref{pi<24} and using the inequality $\theta(y; q, l)\leq \psi(y; q, l)$ valid for all 
$y$.  For primes $p\equiv 5\pmod {24}$, again from Lemma \ref{pi<24}, we have 
\begin{align*}
\theta(2y; 24, 5)&-\theta(y+1; 24, 5)\geq \frac{2y}{\varphi(24)}\left(1-\frac{\varphi(24)}{\sqrt{2y}}\right)\\
& -\frac{y+1}{\varphi(24)}\left(1+\frac{0.745\varphi(24)}{\sqrt{y+1}}\right)\\
&=\frac{y}{\varphi(24)}\left\{1-\frac{2\varphi(24)}{\sqrt{2y}}-\frac{1}{y}-
\frac{0.745\varphi(q)}{\sqrt{y+1}}-\frac{0.745\varphi(q)}{y\sqrt{y+1}}\right\}>0,
\end{align*}
for $y\geq 400$.  Thus, there is a prime $p\equiv 5\pmod {24}$ with $y+1<p\leq 2y$ for $y\geq 400$.  This is also true for 
$15\leq y<400$ by checking at integer values of $y$. Since a prime congruent to $ 5\pmod {24}$ is also congruent to 
$ 5\pmod 8$ and $5\pmod{12}$, the last two assertions can be obtained by checking it in the range $6\leq y<15$.  
\end{proof}

\iffalse 

The following result follows from \cite{Lehmer}. 

\begin{lemma}\label{lehmer}
The equation $X(X^2-3)=2^{l_1}3^{l_2}11^{l_3}13^{l_4}$ in positive integers $X, l_1, l_2, l_3, l_4$ 
 does not have any solution. 
\end{lemma}
\begin{proof}
Since $3$ divides the right hand side of the given equation, we have $3\mid X(X^2-3)$ implying $3\mid X$. Put $X=3X_1$. Then 
$$
2^{l_1}3^{l_2}11^{l_3}13^{l_4}=X(X^2-3)=3\cdot (3X_1)(3X^2_1-1).
$$ 
This shows that  with $N:=3X_1^2$, we have that
$N(N-1)$ is composed of primes from the set $\{2, 3, 11, 13\}$.  Then $N$ is given by
\cite[Table IA]{Lehmer} with $t=6$ which is possible only for $N=27$. But then $3X^2_1=27$ gives 
$X=3X_1=9$ and $9\cdot (9^2-3)=2\cdot 3^3\cdot 13$ which is not a solution of the original equation. 
Hence, the assertion.
 \end{proof}
\fi 

In the next section, we use use Lemmas \ref{m14}, \ref{pix} and \ref{pi<30} to obtain upper bound for prime powers dividing a product of 
Catalan numbers and middle binomial coefficients. 

\section{Upper bound for prime powers dividing a product of Catalan numbers and middle binomial coefficients}

For positive integers $1<m_1\leq m_2\leq \cdots \leq m_k$, let 
\begin{align*}
\mathcal{D}:=\mathcal{D}(m_1, m_2, \ldots, m_k):=\prod^k_{i=1}D_{m_i}, \quad D_{m_i}\in \{C_{m_i}, B_{m_i}\}. 
\end{align*}
Let $n_0$ be a positive integer. Recall the definition of $M_{n_0}(\ell)$ given in \eqref{Mn}. 
We use analytic methods to find an upper bound for 
\begin{equation*}
M_{n_0}(\mathcal{D}):=\log\left(\prod_{\substack{p^{\nu_p}\|\cD \\ 
p\equiv \pm 1\pmod {n_0}}} p^{\nu_p}\right)=
\sum _{\substack{p^{\alpha_p}\|\cD \\ p\equiv \pm 1\pmod {n_0}}}\nu_p\log p.
\end{equation*}
This is the content of the following lemma. 

\begin{lemma}\label{MnBD}
For $n_0\geq 25$, we have 
\begin{equation}\label{Mnub}
M_{n_0}(\cD)\leq  \begin{cases}
\left(\frac{3.9}{\varphi(n_0)}+2.92\frac{\log 3n_0}{n_0}\right)(\log \cD-\log 2), 
& \  {\rm if} \ n_0 \ {\rm is \ even};\\
\left(\frac{3.9}{\varphi(n_0)}+1.46\frac{\log 3n_0}{n_0}\right)(\log \cD-\log 2),   & \ 
{\rm if} \ n_0 \ {\rm is \ odd}.
\end{cases}
\end{equation}
Let $n_0\in \{9, 16, 24\}$ or $5\leq n_0\leq 23$ be a prime.  We have  
\begin{align}\label{Mnub<30}
M_{n_0}(\cD)\le \begin{cases}
 \frac{\delta_0}{\varphi(n_0)}\log \cD, & {\rm if} \quad m_k<1500;\\
\frac{\delta_0}{\varphi(n_0)}\left(\log \cD -\log 2\right), & {\rm if} \quad m_k\geq 1500; \\
\end{cases}
\end{align}
where $\delta_0$ is given by 
\begin{center}
\begin{tabular}{|c||c|c|c|c|c|c|c|} \hline
$n_0$ & $5$ & $7$  & $9$ & $16$ & $24$ & $11\leq n_0\leq 23$    \\ \hline \hline
$\delta_0$ & $2.61$ & $3.19$ & $3.57$ & $2.89$ & $2.746$ & $3.3$   \\ \hline 
\end{tabular}
\end{center}
\end{lemma}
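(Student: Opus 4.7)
The plan is to control $M_{n_0}(\cD)$ by bounding each factor $M_{n_0}(D_{m_i})$ separately---using $D_{m_i}\mid B_{m_i}$ to reduce to middle binomial coefficients---and then summing. By Kummer's theorem, $\nu_p(B_m)=\sum_{k\ge 1}(\lfloor 2m/p^k\rfloor-2\lfloor m/p^k\rfloor)$ with each summand in $\{0,1\}$. For $p>\sqrt{2m}$ only the $k=1$ term survives, and it equals $1$ precisely when $p$ lies in one of the intervals $I_j:=(m/j,\,2m/(2j-1)]$, $j\ge 1$. Splitting $M_{n_0}(B_m)$ accordingly and absorbing the contribution of prime powers $p^k\le 2m$ with $k\ge 2$ into a $\psi-\theta$ difference at the top, one obtains
\[
M_{n_0}(B_m)\le \sum_{l\in\{1,\,n_0-1\}}\Bigl[\psi(2m;n_0,l)-\theta(m;n_0,l)+\theta(2m/3;n_0,l)-\theta(m/2;n_0,l)+\theta(2m/5;n_0,l)\Bigr],
\]
where the last term $\theta(2m/5;n_0,l)$ majorises the tail $\bigcup_{j\ge 3}I_j\subseteq(0,2m/5]$ by inclusion. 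This is precisely the combination estimated in Lemma~\ref{pi<30}.

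For the specific values of $n_0$ governed by \eqref{Mnub<30}, I would apply Lemma~\ref{pi<30} directly to the right-hand side with $y=m_i$ and $q=n_0$, and then divide by Stirling's lower bound $\log(D_m/2)\ge m$ (Lemma~\ref{m14}, valid for $m\ge 14$; smaller $m$ handled by a finite check). The main term $\tfrac{47}{15}\cdot\tfrac{2m}{\varphi(n_0)}$ from Lemma~\ref{pi<30} compared against $\log(D_m/2)\sim m\log 4$ yields a coefficient $\tfrac{2}{\varphi(n_0)\log 4}\cdot\tfrac{47}{15}\approx 2.26/\varphi(n_0)$, which is padded up to the tabulated $\delta_0/\varphi(n_0)$ by the explicit error pieces in Lemma~\ref{pi<30}; the threshold $m_k\ge 1500$ is exactly the hypothesis of that lemma, and once it is in force there is enough slack to subtract $\log 2$ from $\log\cD$ on the right. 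For the general bound \eqref{Mnub} I would instead apply Brun--Titchmarsh (Lemma~\ref{pix}) in the difference form $\theta(2y;n_0,l)-\theta(y;n_0,l)\le \tfrac{2y\log 2y}{\varphi(n_0)\log(y/n_0)}$ on each of the three main intervals, and bound the $\psi-\theta$ tail by $O(\sqrt{m}\log m)$. The identity $\sum_{j\ge 1}\tfrac{1}{j(2j-1)}=\log 4$ matches the Stirling asymptotic $\log B_m\sim m\log 4$, while the Brun--Titchmarsh overhead of $2$ per residue class (across the two classes $\pm 1\pmod{n_0}$) produces a leading constant $\approx 4/\varphi(n_0)$, trimmed to $3.9/\varphi(n_0)$ by the $\log 2$ sharpening. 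The halving $2.92\to 1.46$ in the correction term when $n_0$ is odd reflects the absence of any $p=2$ contribution to $M_{n_0}$.

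Summing the per-$m_i$ bound over $i$ and invoking additivity $\log\cD=\sum_i\log D_{m_i}$ then yields the statement. The main obstacle will be the explicit bookkeeping needed to pin down the numerical constants $3.9$, $2.92$, $1.46$ and the tabulated $\delta_0$: one must carefully combine the explicit estimates of Lemmas~\ref{pix}, \ref{pi<24} and~\ref{pi<30} with Lemma~\ref{m14}, verify the resulting inequality uniformly in $m_i$, and perform a finite residual check in the regimes $m_i<14$ (where Stirling is weak) and $m_k<1500$ (where Lemma~\ref{pi<30} is unavailable). The optimisation between what is absorbed into the leading $\varphi(n_0)^{-1}$ term versus into the correction $\tfrac{\log 3n_0}{n_0}$ is what locks in the precise coefficients stated.
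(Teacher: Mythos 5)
Your overall architecture matches the paper's: reduce to $\nu_p(B_m)$ via Kummer/Legendre, organize the primes into the intervals $(2j/(2i+1),2j/(2i)]$ to arrive at exactly the $\psi$--$\theta$ combination of Lemma~\ref{pi<30}, compare the resulting linear-in-$j$ bound against Stirling's lower bound for $\log(C_j/2)$ factor by factor, verify $j<1500$ by direct computation, and sum using $\sum_j t_j\log(C_j/2)\le\log\cD-\log 2$. For \eqref{Mnub<30} this is essentially the paper's proof (one small caveat: dividing by $\log(D_m/2)\ge m$ is not enough to land under the tabulated $\delta_0$; the paper needs the sharper constant $\log(C_j/2)\ge 1.37\,j$ for $j\ge 1500$ from Lemma~\ref{m14}, since $\delta_1\approx 3.4$ while $\delta_0$ is as small as $2.61$).

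For \eqref{Mnub} there is a concrete gap. Brun--Titchmarsh (Lemma~\ref{pix}) requires $y>q$ and carries a $\log(y/q)$ in the denominator, so it is vacuous precisely in the range where $2j$ is comparable to $n_0$ --- and since $n_0$ is unbounded this range cannot be swept into a finite check. The paper splits at $2j=(3n_0)^{1+1/t}$: above the threshold BT applies with $\log(2j)/\log(j/n_0)\le 1+t$, and below it one must fall back on the \emph{trivial count} of integers $\equiv\pm1\pmod{n_0}$ up to $2j$, multiplied by the weight $\log(2j)\le(1+1/t)\log 3n_0$. That trivial count is the actual source of the correction term $\frac{\log 3n_0}{n_0}$, and it is also where the even/odd dichotomy arises: for odd $n_0$ the (odd) primes $\equiv\pm1\pmod{n_0}$ lie in fixed residue classes modulo $2n_0$, halving the count from $4j/n_0$ to $2j/n_0$. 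Your attribution of the factor $2.92\to1.46$ to "the absence of any $p=2$ contribution to $M_{n_0}$" is therefore incorrect ($p=2$ never contributes for $n_0\ge25$ in either parity), and your accounting of the leading constant ($\approx 4/\varphi(n_0)$ "trimmed" by the $\log 2$ saving) misses both the extra $4/3$ coming from the $\pi_{\pm1}(2j/3)$ term (giving $16/3$) and the division by the Stirling constant $\log(C_{686}/2)/686\approx1.368$, which is what actually produces $3.9$. Without the two-regime split and the optimization of the threshold parameter $t$ balancing the trivial bound against BT, the stated coefficients cannot be recovered.
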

 
\begin{proof}
Let $t_{1j}$ and $t_{2j}$ be the number of $i'$s such that $D_{m_i}=C_j$ and $D_{m_i}=B_j$, 
respectively. Put $t_j=t_{1j}+t_{2j}$. Then  
$$\log \cD=\sum_{1\leq i \leq k}\log D_{m_i}=\sum_{1<j\leq m_k}(t_{1j}\log C_j+t_{2j}\log B_j)\geq \sum_{1<j\leq m_k}t_j\log C_j$$
since $B_m>C_m$. Let $7<M\leq m_k$ be an integer which we will choose later on. Using 
Lemma \ref{m14}, we get
\begin{align*}
\log \cD&\geq \sum_{j\leq M}t_j\log C_j+ \sum_{j>M}t_j\log C_j\\
&\geq \log 2+\sum_{j\leq M}t_j\log ({C_j}/{2})+\frac{\log ({C_M}/{2})}{M}\sum_{j>M}t_jj,
\end{align*}
so that 
\begin{align}\label{cDbd}
\sum_{j>M}t_jj\leq \frac{M}{\log ({C_M}/{2})}\left(\log \cD-\log 2-\sum_{j\leq M}t_j\log ({C_j}/{2})\right). 
\end{align}
Here, as usual, the empty sum is taken to be $0$. 
 For a prime number $p$ and a positive integer $t$, we write $\nu_p(t)$ for the exact exponent of 
$p$ in the prime factorization of $t$. Given a positive integer $j$, let 
\begin{align*}
\xi_1(j): = \sum _{ p\equiv \pm 1\pmod {n_0}}\nu_p(C_j)\log p \quad {\rm and} \quad 
\xi_2(j): = \sum _{ p\equiv \pm 1\pmod {n_0}}\nu_p(B_j)\log p.
\end{align*}
Then $\xi_1(j)\leq \xi_2(j)$ and hence $M_{n_0}(\mathcal{D})\leq \sum_{j}t_j\xi_2(j)$.   For a prime $p$, we have 
\begin{align*}
\nu_p(B_j)=\sum_{\ell\ge 1}\left( \left\lfloor \frac{2j}{p^\ell}\right\rfloor-2\left\lfloor \frac{j}{p^\ell}\right\rfloor\right)
\leq 
\begin{cases}
1, & {\rm if} \ \frac{2j}{2i}<p\leq \frac{2j}{2i-1},~ i\in \{1, 2\};\\
0, & {\rm if} \ \dfrac{2j}{2i+1}<p\leq \frac{2j}{2i},~ i\in \{1, 2\};\\
\left\lfloor \frac{\log(2j)}{\log p}\right\rfloor, & {\rm if} \ p\leq \dfrac{2j}{5}.
\end{cases}
\end{align*}
Therefore, 
\begin{align}\label{xim}
\begin{split}
\xi_2(j)&\leq \sum _{\substack{(2j)^{1/2}<p\leq 2j \\ p\equiv \pm 1\pmod {n_0}}}
\left( \left\lfloor \frac{2j}{p}\right\rfloor-2\left\lfloor \frac{j}{p}\right\rfloor\right)\log p 
+ \sum _{\substack{p\leq (2j)^{1/2}\\ p\equiv \pm 1\pmod {n_0}}}\left\lfloor \frac{\log (2j)}{\log p}\right\rfloor \log p\\
&\leq \sum _{\substack{p\leq 2j \\ p\equiv \pm 1\pmod {n_0}}}\left\lfloor \frac{\log (2j)}{\log p}\right\rfloor \log p-
\sum _{\substack{1\le i\le 2\\ \frac{2j}{2i-1}<p\leq \frac{2j}{2i} \\ p\equiv \pm 1\pmod {n_0}}}\log p \\
& \leq \sum_{\ell\in \{1, -1\}}\left\{\psi(2j; n_0, \ell)+\sum^3_{t=2}\theta(2j/t; n_0, \ell)-\sum^2_{t=1}\theta(j/t; n_0, \ell)\right\}. 
\end{split}
\end{align} 
Recall that $\pi(x; n_0, \ell)$ stands for the number of primes $p\le x$ satisfying the congruence $p\equiv \ell\pmod {n_0}$. 
We put $\pi_{\pm 1}(x):=\pi(x; n_0,1)+\pi(x; n_0, -1)$. Then 
\begin{align}\label{2mm2m/3}
\xi_2(j)\leq (\pi_{\pm 1}(2j)-\pi_{\pm 1}(j)+\pi_{\pm 1}(2j/3))\log (2j),
\end{align}
by \eqref{xim}.  
Let us assume that $n_0\geq 25$. Let $t>0$. We split the analysis in two cases according to whether $2j\leq (3n_0)^{1+1/t}$ or 
$2j> (3n_0)^{1+1/t}$. 

Assume first that $2j\ge (3n_0)^{1+1/t}$. Then $2j/3n_0\ge (2j)^{1/(1+t)}$ and 
therefore 
$$\log(j/n_0)\ge \log(2j/3n_0)\ge (\log (2j))/(1+t).
$$  
From \eqref{2mm2m/3} and Lemma \ref{pix}, we get 
\begin{align*}
\xi_2(j)&\leq  \frac{4j\log 2j}{\varphi(n_0)\log (j/n_0)}+\frac{(4j/3)\log 2j}{\varphi(n_0)\log (2j/3n_0)}
\leq \frac{16(1+t)j}{3\varphi(n_0)}. 
\end{align*} 
In the smaller range $2j\leq (3n_0)^{1+1/t}$, using the trivial estimates and the fact that primes congruent 
to one of $\pm 1$ modulo $n_0$ are of the form $2ln_0\pm 1$ when $n_0$ is odd, we get  
\begin{align*}
&\pi_{1, -1}(2j)-\pi_{1, -1}(j)+\pi_{1, -1}(2j/3)\leq \pi_{1, -1}(2j)\\
\leq &\begin{cases}
\frac{2j-1}{n_0}+\frac{2j+1}{n_0}=\frac{4j}{n_0}, & {\rm if} \ n_0 \ {\rm is \ even};\\
\frac{2j-1}{2n_0}+\frac{2j+1}{2n_0}=\frac{2j}{n_0}, & {\rm if} \ n_0 \ {\rm is \ odd}.
\end{cases}
\end{align*}
Let $\eta:=1, 2$ according to whether $n_0$ is even or odd, respectively. From \eqref{2mm2m/3}, we get 
$$\xi_2(j)\leq \left(\frac{4}{\eta}\right)\frac{j\log 2j}{n_0}.$$ 
We choose 
$$t:=\frac{3.0003}{4\eta}\frac{\varphi(n_0)\log 3n_0}{n_0} \qquad {\rm and} \qquad 
M:=\left\lfloor \frac{1}{2}(3n_0)^{1+\frac{1}{t}}\right\rfloor. 
$$
Since $n_0/\varphi(n_0)\geq 2/\eta$, we observe that 
$$
\frac{1}{2}(3n_0)^{1+\frac{1}{t}}\geq 1.5n_0\exp\left(\frac{8}{3.0003}\right)>686 \quad {\rm for} \quad n_0\geq 32,
$$
which together with $M\geq 686$ for each $25\leq n_0<32$ implies $M\geq 686$ for all $n_0\geq 25$.  
From \eqref{cDbd}, we have 
\begin{align*}
&M_{n_0}(\cD)\le \sum_{j}t_j\xi_2(j) \\
\leq & \frac{M}{\log (\frac{C_M}{2})}\frac{16(1+t)}{3\varphi(n_0)}
\left(\log \frac{\cD}{2}-\sum_{j\leq M}t_j\log \left(\frac{C_j}{2}\right)\right)+ \sum_{j\leq M}\frac{4t_jj\log 2j}{\eta n_0}\\
\leq&  \frac{M}{\log (\frac{C_M}{2})}\frac{16(1+t)}{3\varphi(n_0)}\log  \frac{\cD}{2}-
\sum_{j\leq M}t_j\left(\frac{M\log ({C_j}/{2})}{\log ({C_M}/{2})}\frac{16(1+t)}{3\varphi(n_0)}-\frac{4j\log 2j}{\eta n_0} \right).
\end{align*}
Since $M>7$,  and we get from \eqref{0001} and $\log 2M\leq (1+1/t)\log 3n_0$
that 
\begin{align*}
&\frac{M\log ({C_j}/{2})}{\log ({C_M}/{2})}\frac{16(1+t)}{3\varphi(n_0)}-\frac{4j\log 2j}{\eta n_0} \\
=& \frac{4M\log ({C_j}/{2})}{n_0\log ({C_M}/{2})}\left(\frac{4(1+t)n_0}{3\varphi(n_0)}-
\frac{j\log 2j}{\eta \log ({C_j}/{2})}\frac{\log ({C_M}/{2})}{M}\right)\\
\geq & \frac{16M\log ({C_j}/{2})}{3n_0\log ({C_M}/{2})}\frac{n_0}{\varphi(n_0)}
\left(\frac{(1+t)n_0}{\varphi(n_0)}-\frac{3\varphi(n_0)}{4\eta n_0}
\frac{1.0001(1+t)\log 3n_0}{t} \right)\geq 0,
\end{align*}
since 
$$
t=\frac{3.0003}{4\eta}\frac{\varphi(n_0)\log 3n_0}{n_0}.
$$ 
Therefore, we have from $M\geq 686$ and Lemma \ref{m14}, that 
\begin{align*}
M_{n_0}(\cD)&\leq  \frac{M}{\log ({C_M}/{2})}\frac{16(1+t)}{3\varphi(n_0)}\log \frac{\cD}{2}\\
&\leq  \frac{16}{3}\frac{686}{\log ({C_{686}}/{2})}
\left(\frac{1}{\varphi(n_0)}+3.0003\left(\frac{\log 3n_0}{4\eta n_0}\right)\right)(\log \cD-\log 2),
\end{align*}
which gives the assertion \eqref{Mnub}. 

We now consider $n_0\leq 24$ as given in the statement of the lemma. Then either $n_0\in \{9, 16, 24\}$, or 
$n_0$ is a prime with $5\leq n_0\leq 23$. We check with exact computations that for $j\leq 1500$,  
\begin{align*}
\xi_1(j) \leq \frac{\delta_0\log C_j}{\varphi(n_0)} \quad {\rm and} \quad \xi_2(j) \leq \frac{\delta_0\log B_j}{\varphi(n_0)},
\end{align*}
where $\delta_0$ are given in the statement of the lemma.  Hence, we have 
$$
M_{n_0}(\cD)=\sum_{1<j\leq m_k}(t_{1j}\xi_1(j)+t_{2j}\xi_2(j))\leq \frac{\delta_0}{\varphi(n_0)}
\sum_{1<j\leq m_k}(t_{1j}\log C_j+t_{2j}\log B_j),$$ 
which gives the assertion \eqref{Mnub<30} for $m_k<1500$. 

We now take $m_k\geq 1500$.  From \eqref{xim} and Lemma \ref{pi<30}, we get  
$$
\xi_1(j)\leq \xi_2(j)\leq \frac{\delta_{1}j}{\varphi(n_0)}\leq \frac{\delta_1j}{\log (C_j/2)}\frac{\log (D_j/2)}{\varphi(n_0)}
\quad {\rm   for} \quad  j\geq 1500,
$$ 
where 
\begin{align*}
\delta_{1}&=\frac{47}{15}+\frac{2\sqrt{2}\ep_\psi}{\sqrt{1500}}\left(1+\frac{1}{\sqrt{3}}+\frac{1}{\sqrt{5}}\right)
+\frac{2\ep_\theta}{\sqrt{1500}}\left(1+\frac{1}{\sqrt{2}}\right),
\end{align*}
and $\ep_\psi$ and $\ep_\theta$ are given in Lemma \ref{pi<24}.  By Lemma \ref{m14},  we have 
$$
\frac{\delta_1j}{\log (C_j/2)}\leq \frac{\delta_1}{1.37}\quad {\text{\rm for each}}\quad j\geq 1500,
$$ 
and we find that 
${\delta_1}/{1.37}\leq \delta_0$.  Thus, 
$$
\xi_1(j)\leq \xi_2(j)\leq \frac{\delta_{0}\log (D_j/2)}{\varphi(n_0)}\quad {\rm  for \ each} \quad j\geq 1500,
$$ 
and  therefore 
\begin{align*}
&M_{n_0}(\cD)= \sum_{j<1500}(t_{1j}\xi_1(j)+t_{2j}\xi_2(j))+
\sum_{j\geq 1500}(t_{1j}\xi_1(j)+t_{2j}\xi_2(j))\\
\leq & \frac{\delta_0}{\varphi(n_0)}\sum_{j<1500}(t_{1j}\log C_j+t_{2j}\log B_j)+
 \frac{\delta_0}{\varphi(n_0)}\sum_{j\geq 1500}\left(t_{1j}\log \frac{C_j}{2}+t_{2j}\log \frac{B_j}{2}\right)\\
\leq &\frac{\delta_{0}}{\varphi(n_0)}\left(\log \cD-\log 2\right).
\end{align*}
Hence, the assertion  \eqref{Mnub<30} follows and the proof is complete. 
\end{proof}

\section{Proof of Theorem \ref{theorem1} \label{th1}}

We recall that for $n\geq 0$
\begin{align*}
U_n=\frac{\alpha^n-\beta^n}{\alpha-\beta} \quad {\rm and} \quad 
V_n= \alpha^n+\beta^n.
\end{align*}
where $\alpha$ and $\beta$ are the roots of the quadratic equation $\lambda^2-r\lambda-s=0$ and 
$r, s$ are coprime nonzero integers with $r^2+4s\neq 0$. We suppose that $\alpha/\beta$ is not a root of unity.  We also recall that we assume
that $r>0$. When $\alpha,\beta$ are real, these conventions imply that $\alpha$ is positive 
so $\alpha>|\beta|$ and in this case $U_n>0$ and $V_n>0$ for all $n\ge 1$.  Further, we put $x={\be}/{\al}$. Thus, $|x|\le 1$. 

Note that $U_1=1\in {\mathcal PBC}$. In fact, if $U_n=\pm 1$ (or $V_n=\pm 1$) then $U_n$ (or $V_n$) are 
also in ${\mathcal PBC}$.  The equations $U_n=\pm 1$ and $V_n=\pm 1$ are important from the Diophantine point of view. 
However, such equations have been solved completely and we refer to \cite{BHV} for more details.  For this reason, 
whenever we study the equations \eqref{eqn1} and \eqref{eqn2}, we omit the cases $n=1$, $U_n=\pm 1$ and 
$V_n=\pm 1$. Thus, we also assume that $m_1>1$. 

We first treat the case of the sequence $\{U_n\}_{n\ge 0}$.  
Assume that the equation \eqref{eqn1} has a solution. Then 
$$
|U_n|=\cD=D_{m_1}\cdots D_{m_k}, \quad D_{m_i}\in \{C_{m_i}, B_{m_i}\}. .
$$ 
For a divisor $n_0$ of $n$, we will compare the upper bound of $M_{n_0}(\cD)$ given by Lemma 
\ref{MnBD} with a lower bound on it obtained by using Lemma \ref{primn_0}. 
We will choose a suitable divisor $n_0$ of $n$ such that these bounds contradict each 
other and hence for $n$ with such divisors $n_0$, $|U_n|$ cannot be a product of Catalan numbers and 
middle binomial coefficients.  

Recall that a prime $p\mid U_n$ is a primitive divisor of $U_n$ if 
$p\nmid U_t$ for $t<n$ and $p\nmid r^2+4s$. Further, the primitive prime 
divisors of $U_n$ are congruent to one of $ \pm 1$ modulo $n$. From the well known result from
\cite{BHV}, we know that a primitive divisor for $U_n$ exist for all $n>30$. Further, for 
$5\leq n\leq 30$, $n\neq 6$, the pairs $(r, s)$ for which a primitive 
divisor for $U_n$ does not exist are given by 
\begin{center}
\begin{tabular}{|c|c|} \hline
$n$ & $(r, s)$   \\ \hline
$5$ & $(1, 1), (1, -2), (1, -3), (1, -4),(2, -11), 
(12, -55), (12, -377)$  \\ \hline
$7$ & $(1, -2), (1, -5)$ \\ \hline
$8$ & $(1, -2), (2, -7)$ \\ \hline
$10$ & $(2, -3), (5, -7), (5, -18)$ \\ \hline
$12$ & $(1, 1), (1, -2), (1, -3), (1, -4), (1, -5), (2, -15)$ \\ \hline
$13, 18, 30$ & $ (1, -2)$ \\ \hline
\end{tabular}
\end{center}
We checked that for $(r, s)$ given above with $n\geq 5, n\neq 6$, the equation \eqref{eqn1} holds 
in several instances. The roots $(\alpha,\beta)$ are real only when $(r,s)=(1,1)$ and then
$$
(r, s, n)=(1,1,5),~(1, 1, 12),\quad U_5=C_3,~U_{12}=B_1^6C_2^2=B_1^2 B_2^2.
$$ 
Hence, we assume now that $U_n$  has a primitive prime divisor $p$ and so
$p\equiv \pm 1\pmod n$. Let $P_n:=P(U_n)$ be the largest primitive divisor of $U_n$. 
From \eqref{eqn1}, we have that $P_n\mid B_{m_k}$ and so $2m_k\geq P_n+1$ since  
$P_n$ is odd. Let $Q_n$ be the least prime congruent to one of $\pm 1$ modulo $n$. Then 
$2m_k\geq P_n+1\geq Q_n+1$ and therefore
\begin{align}\label{logal}
2|\alpha|^{n}\ge \left|\frac{\alpha^n-\beta^n}{\alpha-\beta}\right|=|U_n|\ge C_{m_k}.
\end{align}
From Lemma \ref{m14},  we have 
\begin{align}\label{logal1}
n\log |\al|\geq \log (C_{m_k}/2)\geq \begin{cases}
1.36m_k \ge 0.68(Q_n+1)\geq 0.68n, & n\geq 400;\\
1.38m_k \ge 0.69(Q_n+1)\geq 0.69n, & n\geq 4200,
\end{cases}
\end{align}
since $Q_n\geq n-1$. 

We have 
\begin{align*}
\log \cD\leq \log |U_n|\leq n\log |\al|+\log |1-x^n|.
\end{align*}
Now we complete the proof by choosing suitable $n_0$ and comparing upper and lower 
bounds of $M_{n_0}(U_n)=\cM_{n_0}(\cD)$. For $n_0\in \{9, 16, 24\}$, or $n_0$ an odd prime 
power, we define 
\begin{align}\label{gn}
g(n_0):=\begin{cases}
\frac{\delta_0}{\varphi(n_0)}, &{\rm if} \ n_0\in \{9, 16, 24\}, \ {\rm or} \ n=p\leq 23;\\
\frac{3.9}{\varphi(n_0)}+\frac{1.46\log 3n_0}{n_0}, & {\rm if} \ n_0\ge 25 \ {\rm is \ odd},
\end{cases}
\end{align} 
where $\delta_0$ is stated in Lemma \ref{MnBD}. By Lemma \ref{MnBD}, we have 
\begin{align}\label{Dub}
M_{n_0}(\cD)\leq g(n_0)\log |U_n|\leq g(n_0)\left(n\log |\al|+\log |1-x^n|\right). 
\end{align}
Let $p^{h+t}\mid n$, where $p$ is a prime and $h>0, t\geq 0$ are integers such that 
$p^h>4$.  Taking $n_0=p^h$ and using \eqref{n_0U} in Lemma \ref{primn_0}, 
we get a lower bound for $M_{n_0}(U_n)=M_{n_0}(\cD)$ which we compare with \eqref{Dub}. We obtain 
\begin{align*}
&g(n_0)\left(n\log |\al|+\log |1-x^n|\right)\\
\geq &
\left(1-\frac{1}{p^{t+1}}\right)n\log |\al|+\log |1-x^n|-\log |1-x^{n/p^{t+1}}|-\log(p^{t+1}),
\end{align*}
implying 
\begin{align}\label{Dcmp}
&\left(1-\frac{1}{p^{t+1}}-g(n_0)\right)
\leq & \frac{(g(n_0)-1)\log |1-x^n|+\log |1-x^{n/p^{t+1}}|+\log(p^{t+1})}{n\log |\al|}.
\end{align}

We consider different cases. 

\subsection{The case when $n$ is even }

We assume that $n>720$. We choose $n_0=p^h$ and $t$ as follows: 
\begin{align}\label{noeven}
(n_0, t)\in \{(2^4, 1), (3^2, 1), (5, 1)\}\cup \{(p, 0) : p>5\}.
\end{align}
Since $2^4\cdot 3^2\cdot 5=720$, we find that for each even 
$n>720$, there is some $(n_0, t)$ in \eqref{noeven}  with $n_0p^t\mid n$. From the triangle inequality 
$$
2|\al^{n/2}|\leq |\al^{n/2}-\be^{n/2}|+|\al^{n/2}+\beta^{n/2}|,
$$  
we have either 
$|\al^{n/2}-\be^{n/2}|\geq |\al|^{\frac{n}{2}}$,  or $|\al^{n/2}+\be^{n/2}|\geq |\al|^{\frac{n}{2}} $.  
Therefore, 
\begin{align*}
 &|\al^{n}-\be^{n}|=|\al^{n/2}-\be^{n/2}||\al^{n/2}+\be^{n/2}|\\
\geq &\begin{cases}
|\al|^{\frac{n}{2}}|\al^{\frac{n}{2}}+\be^{\frac{n}{2}}|=|\al|^{\frac{n}{2}}|V_{\frac{n}{2}}|\geq 
|\al|^{\frac{n}{2}}, \quad & \quad  |\al^{\frac{n}{2}}-\be^{\frac{n}{2}}|\geq |\al|^{\frac{n}{2}};\\
|\al-\be||\frac{\al^{\frac{n}{2}}-\be^{\frac{n}{2}}}{\al-\be}||\al|^{\frac{n}{2}}\geq |U_{\frac{n}{2}}||\al|^{\frac{n}{2}},
\geq |\al|^{\frac{n}{2}} & \quad  |\al^{\frac{n}{2}}+\be^{\frac{n}{2}}|\geq |\al|^{\frac{n}{2}},\\
\end{cases}
\end{align*}
since $V_{{n}/{2}}, U_{{n}/{2}}$ are integers and $|\al-\be|\geq 1$. Hence,  
$$|1-x^n|=|\al|^{-n}|\al^n-\be^n|\geq |\al|^{-\frac{n}{2}}.$$
Using the above inequality together with the inequality $|1-x^{n/p^{t+1}}|\leq 2$ (since $|x|\leq 1$) in \eqref{Dcmp}, we get 
\begin{align*}
\frac{\log (2p^{t+1})}{n\log |\al|}&\geq 1-\frac{1}{p^{t+1}}-g(n_0)+\frac{g(n_0)-1}{2}=\frac{1}{2}-\frac{1}{p^{t+1}}-\frac{g(n_0)}{2}. 
\end{align*}
From \eqref{logal1}, we have 
\begin{align*}
0\geq \frac{1}{2}-\frac{1}{p^{t+1}}-\frac{g(n_0)}{2}-\frac{\log (2p^{t+1})}{0.68n}. 
\end{align*}
For a fixed choice of $n_0=p^h$ and $t$, the right--hand side of the above inequality is an increasing function  of 
$n$. We check that for $(n_0, t)$ in \eqref{noeven} with $n_0<29$, the above inequality is not valid at $n=720$ 
and hence it is not valid for any $n\geq 720$. Further, for $n_0\geq 29$, we have $n_0=p$ is prime, which together 
with the observation that $g(p)$ is a decreasing function of $p$, we obtain 
\begin{align*}
0\geq \frac{1}{2}-\frac{1}{p^{t+1}}-\frac{g(n_0)}{2}-\frac{\log (2p^{t+1})}{0.65n}\geq 
 \frac{1}{2}-\frac{1}{29}-\frac{g(29)}{2}-\frac{\log (2\times 29)}{0.68n}. 
\end{align*}
We check that the right--most side is positive for $n\geq 720$ and hence we get  a contradiction for all 
$n\geq 720$. Thus, equation \eqref{eqn1} has no even solution $n>720$. 

\subsection{The case when $\al, \be$ are complex conjugates}

From the previous section, we may assume that either $n>720$ is odd or $n$ is an even number $\leq 720$.  
Since we are shooting for the inequality $n<6500$, we may assume that $n\geq 6500$ is odd.  Also, we have $Q_n\geq 2n-1$ which together with 
$2m_k\geq Q_n+1$, inequality \eqref{logal} and Lemma \ref{m14} gives 
$$\log |U_n|\geq 1.38n \qquad {\rm and} \qquad n\log |\al|\geq 1.38n.$$ 
We choose $n_0$ of the form $p^h$ and $t$ given by  
\begin{align}\label{nocomp}
(n_0, t)\in \{(3^2, 2), (5, 1), (7, 1)\}\cup \{(p, 0) : p\geq 11\}.
\end{align}
Since $3^3\cdot 5^2\cdot 7<6500$, we find that for each odd $n\geq 6500$, there is some 
$(n_0, t)$ in \eqref{nocomp}  with $n_0p^t\mid n$. 

First we consider the case when $\log |\al|\leq 4$.  We use  
$$|1-x^n|=\frac{|\al-\be||U_n|}{|\al|^n}\geq \frac{|U_n|}{|\al|^n}, \quad |1-x^{n/p^{t+1}}|\leq 2 \quad {\rm and} 
\quad \log |\al|\leq 4$$ 
in \eqref{n_0U} and compare it with \eqref{Dub} to obtain 
\begin{align*}
g(n_0)\log |U_n|\geq M_{n_0}(U_n) \geq \log |U_n|-\frac{4n}{p^{t+1}}-\log (2p^{t+1}). 
\end{align*}
Since $\log |U_n|\geq 1.38n$, we obtain 
\begin{align}\label{al<4U}
0\geq 1.38(1-g(n_0))-\frac{4}{p^{t+1}}-\frac{\log (2p^{t+1})}{n}.  
\end{align}
For a fixed choice of $n_0=p^h$ and $t$, the right--hand side of the above inequality is an increasing function  of 
$n$. We check that for $(n_0, t)$ in \eqref{nocomp} with $n_0<29$, the above inequality 
is not valid at $n=6500$ and hence it is not valid for any $n\geq 6500$. Further, for $n_0\geq 29$, we have $n_0=p$ is 
prime and $t=0$, which together with the observation that $g(p)$ is a decreasing function of $p$, we obtain 
\begin{align*}
0 & \geq 1.38(1-g(n_0))-\frac{4}{p^{t+1}}-\frac{\log (2p^{t+1})}{n}\\
& \geq 1.38(1-g(29))-\frac{4}{29}-\frac{\log (2\cdot 29)}{n}.  
\end{align*}
We check that the right--most side is positive for $n\geq 6500$ and hence we get  a contradiction for any 
$n\geq 6500$. Thus, the equation \eqref{eqn1} does not have an odd solution $n\geq 6500$ in case $\log |\alpha|<4$. 

Assume now that $\log |\al|>4$.  By Lemma \ref{-44.72}, we get 
$$
\log |1-x^n|\geq -f(n)\log |\al|,
$$ 
where $f(n)$ is given by formula \eqref{f(x)}. Using this inequality along with 
$$|1-x^n|\leq 2\quad {\text{\rm  and}}\quad n\log |\al|\geq 4n
$$ 
(since $\log |\al|>4$)  in \eqref{Dcmp}, we obtain
\begin{align}\label{al>4U}
0\geq 1-\frac{1}{p^{t+1}}-g(n_0)+\frac{(1-g(n_0))f(n)}{n}-\frac{\log (2p^{t+1})}{4n}. 
\end{align}
For a fixed $n_0=p^h$ and $t$, the right--hand side of the above inequality is an increasing function  of 
$n$. We check that for $(n_0, t)$ in \eqref{nocomp} with $n_0<29$, the above inequality 
is not valid at $n=6500$ and hence it is not valid for any $n\geq 6500$. Further, for $n_0\geq 29$, we have 
$n_0=p$ is prime and $t=0$ and hence the right--hand side of the above inequality is at least 
\begin{align*}
1-\frac{1}{29}-g(29)+\frac{(1-g(29))f(n)}{n}-\frac{\log (2\cdot 29)}{4n }. 
\end{align*}
We check that the above quantity is positive for $n\geq 6500$ and hence we get  a contradiction for any 
$n\geq 6500$. Thus, the equation \eqref{eqn1} has no odd solution $n\geq 6500$ in case $\log |\alpha|>4$. 

\subsection{The case when $\al, \be$ are real and $n\geq 5, n\notin \{6, 8, 12, 24\}$}

We now consider the case when $\alpha$ and $\beta$ are real.  Recall that in this case $\alpha>0$ and $U_n>0$. For the proof of  Theorem \ref{theorem1}, we 
may assume that $n\geq 5$, $n\notin \{6, 8, 12, 24\}$. We will consider the case $n=24$ separately in the 
next section. We choose $n_0=p^h$ with $t=0$ as   
\begin{align}\label{noreal}
n_0\in \{2^4, 3^2\}\cup \{p : p\geq 5\}.
\end{align}
Note that each $n\geq 5, n\notin \{6, 8, 12, 24\}$ is divisible by some $n_0$ in \eqref{noreal}.   

Let $n_0=2^4=16$. Then $p=2$, $4\mid n$ and hence 
 $$g(16)\log |1-x^n|<0  \quad {\rm and} \quad \frac{1-x^n}{1-x^{n/p}}=1+\sum^{p-1}_{i=1}x^{in/p}>1.$$ 
Using this in \eqref{Dcmp} together with $n\geq 16$ and $\al\geq {\displaystyle{\frac{1+\sqrt{5}}{2}}}$, we get 
\begin{align*}
0 \geq 1-\frac{1}{2}-g(16)-\frac{\log 2}{n\log \al} \geq 
\frac{1}{2}-g(16)-\frac{\log 2}{16\log \left(\frac{1+\sqrt{5}}{2}\right)}.\end{align*}
We find that the right--most quantity is positive, which is a contradiction. 
Thus, equation \eqref{eqn1} has no solution when $\alpha,~\beta$ are real with $16\mid n$. 

Let $n_0\neq 2^4$. Then $p>2$. Writing 
$$
1-x^n=(1-x^{n/p})\left(\frac{1-x^n}{1-x^{n/p}}\right),
$$ 
we have  
 \begin{align*}
 |1-x^{n/p}|\leq 2 \quad {\rm and}  \quad \frac{1-x^n}{1-x^{n/p}}=\begin{cases}
 1+\sum^{p-1}_{i=1}y^{i}>1, &  y=x^{\frac{n}{p}}>0;\\
 \frac{1-y(y^{(p-1)/2})^2}{1-y}\geq \frac{1}{1-y}>\frac{1}{2}, &  y=x^{\frac{n}{p}}<0.
\end{cases}
 \end{align*}
 Using this in \eqref{Dcmp}, we obtain 
 \begin{align*}
\log \al &\leq  \frac{(g(n_0)-1)\log \left(\frac{1-x^n}{1-x^{n/p}}\right)+g(n_0)\log (1-x^{n/p})+\log p}{
n(1-{1}/{p}-g(n_0)))}\\
 &\leq \frac{(1-g(n_0))\log 2+g(n_0)\log 2+\log p}{n(1-{1}/{p}-g(n_0))}\\
&=  \frac{\log (2p)}{n(1-{1}/{p}-g(n_0))}.
\end{align*}
This together with $n\geq n_0$ and $\al\geq {\displaystyle{\frac{1+\sqrt{5}}{2}}}$ gives 
\begin{align}\label{alRealU}
\log \left(\frac{1+\sqrt{5}}{2}\right) & \leq \log \al \leq \frac{\log (2p)}{n(1-{1}/{p}-g(n_0))}\nonumber\\
& \leq \begin{cases}
\frac{\log 2p}{n_0(1-{1}/{p}-g(n_0))}, &  \quad n_0<29;\\
\frac{\log (2\cdot 29)}{29(1-{1}/{29}-g(29))}, & \quad n_0=p\geq 29.
\end{cases} 
\end{align}
We check that the right--most quantity exceeds ${\displaystyle{\log\left( \frac{1+\sqrt{5}}{2}\right)}}$ except when 
$n_0=p\in \{5, 7\}$. Further, for $n_0=p\in \{5, 7\}$, putting $n=p\ell$,  we obtain by using \eqref{logal1},
\begin{align*}
\log (C_{m_k}/2)\leq n\log \al=p\ell\log \al\leq \frac{\log 2p}{1-{1}/{p}-g(p)}
\leq \begin{cases}
15.62, & {\rm if} \ p=5;\\
8.11, & {\rm if} \ p=7. 
\end{cases}
\end{align*}
This gives $m_k\leq 15, 9$ according to whether $n_0=p=5, 7,$ respectively. Further 
$1\leq \ell\leq 6, 2$, according as $p=5, 7$, respectively since $\al\geq {\displaystyle{\frac{1+\sqrt{5}}{2}}}$. 
This together with $P(U_n)\leq P(B_{m_k})$ yields  
$$\log \left(\frac{1+\sqrt{5}}{2}\right)\leq \log \al\leq \begin{cases}
\frac{15.62}{5\ell}, & {\rm if} \ p=5;\\
\frac{8.11}{7\ell}, & {\rm if} \ p=7
\end{cases} \quad {\rm and} \quad P(U_{p\ell})\leq 
\begin{cases}
29, & {\rm if} \ p=5;\\
19, & {\rm if} \ p=7. 
\end{cases} $$
For the pairs $(r, s)$ given by Lemma \ref{rsBD} with the conditions above, we check that the equation \eqref{eqn1}  
has no solution with $n=p\ell$.  Therefore, equation \eqref{eqn1} has no solution for $\al, \be$ real and 
$n\geq 5, n\notin \{6, 8, 12, 24\}$. 
  
\subsection{The case when $\al, \be$ are real and $n=24$}

Let $\al, \be$ be real and $n=24$. Then $|x|<1$. We have
\begin{align}\label{UD24}
\log \cD=\log U_{24}=\log \left(\frac{\al^{24}-\beta^{24}}{\al-\beta}\right)=
23\log \al+\log\left( \frac{1-x^{12}}{1-x}\right)+\log (1+x^{12}). 
\end{align}
We take $n_0=n=24$. Let $g>0$ and $\lambda\geq 0$ be such that 
\begin{align}\label{Dub24}
M_{24}(\cD) \leq g\left(\log \cD-\lambda) \leq g(23\log \al+\log\left|\frac{1-x^{12}}{1-x}\right|+\log |1+x^{12}|-\lambda\right).
\end{align}
In particular,
$$
g\leq g_0(24)=\frac{2.746}{8}\quad {\text{\rm  and}}\quad \lambda=0,
$$ 
by \eqref{Dub}. We now take $n_0=24, t=0$ in \eqref{n_0U} to get a  
lower bound for $M_{24}(U_{24})=M_{24}(\cD)$ and compare it with \eqref{Dub24} to obtain 
$$8\log |\al|+\log |1+x^{12}|-\log 6
\leq g\left(23\log \al+\log \left|\frac{1-x^{12}}{1-x}\right|+\log |1+x^{12}|-\lambda\right).$$
This gives 
$$(8-23g)\log \al\leq g\left(\log \left|\frac{1-x^{12}}{1-x}\right|+\left(1-\frac{1}{g}\right)\log |1+x^{12}|+\frac{\log 6}{g}-\lambda\right).$$
Recall that $|x|<1$. Assume that $x<0$. Then 
$$
\frac{1-x^{12}}{1-x}=1+x\left(\frac{1-x^{11}}{1-x}\right)<1\quad {\text{\rm  and}}\quad 1+x^{12}>1,
$$ 
which together 
with $g<1$ implies  the right hand side of the above inequality is strictly less than $\log 6$. 

Assume next that $x>0$.  Then 
 $$
 \frac{1-x^{12}}{1-x}=1+x+x^2+\cdots x^{11}<12,
 $$ 
 since $x<1$.  For any $x_0$ with $0<x_0<1$, we have 
\begin{align*}
\log\left |\frac{1-x^{12}}{1-x}\right|& +\left(1-\frac{1}{g}\right)\log |1+x^{12}|\\
\leq& \begin{cases}
\log 12+\left(1-\frac{1}{g}\right)\log (1+x_0), & x>x_0^{\frac{1}{12}},\\
\log \left|\frac{1-x_0}{1-x_0^{\frac{1}{12}}}\right|, & x\leq x_0^{\frac{1}{12}}. 
\end{cases}
\end{align*}
Putting 
$$y_0:=y_0(g, x_0)=\frac{\log 6}{g}-\lambda+
\max \left(\log 12+\left(1-\frac{1}{g}\right)\log (1+x_0), \log \left|\frac{1-x_0}{1-x_0^{\frac{1}{12}}}\right|\right),$$
we get 
\begin{align}\label{g|al|}
\log \al<\frac{y_0g}{8-23g} \qquad {\rm or} \qquad \al<\exp\left(\frac{y_0g}{8-23g}\right).
\end{align}

As stated before, we have 
$$
g\leq g_0(24)=\frac{2.746}{8}<0.3433\quad {\text{\rm and}}\quad \lambda=0,
$$ 
by \eqref{Dub}. Taking $g=0.3433, \lambda=0$ and $x_0=0.298$, we get $y_0\leq 7.21$ and hence $\log \al<23.78$ by \eqref{g|al|}. 
However, for $m_k\geq 420$, we have 
$$
\log \al\geq \frac{\log (C_{m_k}/2)}{24}\geq \frac{\log (C_{240}/2)}{24}>24.82,
$$ 
by \eqref{logal1}. Thus, $m_k<420$. 

For each $j<420$, let  
\begin{align*}
\ep_{1j}:=\frac{M_{24}(C_j)}{\log C_j} \qquad {\rm and} \qquad \ep_{2j}:=\frac{M_{24}(B_j)}{\log B_j}.
\end{align*}
Then $\ep_{1j}=\ep_{2j}=0$ for $j<12$ since $23$ is the least prime congruent to one of $\pm 1$ modulo $24$. 
We check that $\max(\ep_{1j}, \ep_{2j})\leq 0.3433$ for $j<420$. Write $U_{24}=\cD=\prod^k_{i=1}D_{m_i}$ as 
$$
\log \cD=\sum_jt_{1j}\log C_j+\sum_jt_{2j}\log D_j,
$$ 
where 
$$
t_{1j}:=\#\{i: D_{m_i}=C_{m_i}\} \qquad  {\rm and} \qquad t_{2j}:=\#\{i: D_{m_i}=B_{m_i}\}.$$
Let 
$\ep_0\geq \max_j\{\ep_{1j}, \ep_{2j}\}$ for $j$ such that $t_{1j}+t_{2j}>0$. Then 
\begin{align*}
M_{24}(U_{24})&=\sum_j(\ep_{1j}t_{1j}\log C_{j}+\ep_{2j}t_{2j}\log B_{j})\\
&=\ep_0\sum_j\left(\left(1-\left(1-\frac{\ep_{1j}}{\ep_0}\right)\right)t_{1j}\log C_{j}\right.\\
& +
\left.\left(1-\left(1-\frac{\ep_{2j}}{\ep_0}\right)\right)t_{2j}\log B_{j}\right)\\
&\leq \ep_0\left(\log \cD-\sum_jt_{1j}\lambda_{1j}-\sum_jt_{2j}\lambda_{2j} \right),
\end{align*}
where 
$$\lambda_{1j}:=\left(1-\frac{\ep_{1j}}{\ep_0}\right)\log C_j
 \quad {\rm and} \quad \lambda_{2j}:=\left(1-\frac{\ep_{2j}}{\ep_0}\right)\log B_{j}.$$
It is clear that $\lambda_{1j}\geq 0$ and $\lambda_{2j}\geq 0$.  

Suppose that $\al\leq 100$. Then $t_{1j}+t_{2j}>0$ implies $j\leq m_k\leq 85$ by \eqref{logal1} since 
$\log (C_{86}/2)>24\log 100$.  For $j\leq 85$ and $j\notin \{37, 38, 39, 40, 41, 42, 43\}$, we find that 
$\ep_{1j}, \ep_{2j}\leq 0.29$. Taking $g=0.29$ and $\lambda=0$ in 
\eqref{Dub24} and taking $x_0=0.25$, we get $y_0\leq 8.12$ and $\al<5.88$ so $\log \al\leq 1.771$. 
By \eqref{logal1} again, we have $j\leq m_k\leq 32$ since $\log (C_{33}/2)>24\log 5.88$ and we furthermore have
$P(U_{24})\leq P(B_{32})\leq 61$.  We check that the equation \eqref{eqn1} with $P(U_{24})\leq 61$ and 
$\al\leq 5.88$ is not possible. Here, we use Lemma \ref{rsBD}  to find all possible pairs $(r, s)$ with $\al\leq 5.88$. 
Thus, we assume  $t_{1j}+t_{2j}>0$ for some 
$j\in \{37, 38, 39, 40, 41, 42, 43\}$. Also 
$$\log \al\geq \frac{\log (C_{37}/2)}{24}>2.0379,
$$ 
by \eqref{logal1}. 
Again $t_{1j}+t_{2j}>0$ implies $j\leq m_k\leq 46$ since 
$$\log C_{37}+\log C_{47}>24\log 100+\log 2\geq \log U_{24}.
$$  
Hence, $P(U_{24})\leq P(B_{46})\leq 89$. Further $47\cdot 53\cdot 59\cdot 61\cdot 67\cdot 71\cdot 73\mid U_{24}$ 
also since $47\cdot 53\cdot 59\cdot 61\cdot 67\cdot 71\cdot 73\mid C_{j}\mid B_j$ for $j\in \{37, 38, 39, 40, 41, 42, 43\}$. 
For the pairs $(r, s)$ with $2.0379<\log \al\leq \log 100$ given by Lemma \ref{rsBD}, we check that 
$P(U_{24})\leq 89$ and $47\cdot 53\cdot 59\cdot 61\cdot 67\cdot 71\cdot 73\mid U_{24}$  is not possible. Therefore, 
 equation \eqref{eqn1} has no solution when $\al\leq 100$.

From now on, we assume that $\al>100$. Suppose that $t_{1j}=0$ for $j\in \{37, 38\}$. Then we find that 
$\max(\ep_{1j}, \ep_{2j})\leq \ep_0=0.324$ for $j<420$ with $j\neq 37, 38$, and also $\ep_{2j}<0.324$ for $j=37, 38$. 
By taking $g=0.324$ and $\lambda=0$ in \eqref{Dub24} and further $x_0=0.28$, we get $y_0\leq 7.5$ and $\al<84.3$. 
This is not possible. Therefore, we have $t_{1j}>0$ for $j=37$ or $j=38$. Then $\max(\ep_{1j}, \ep_{1j})\leq \ep_0=0.3433$ for 
$j<420$. Taking $g=\ep_0=0.3433$ and $\lambda=\sum_jt_{1j}\lambda_{1j}+\sum_jt_{2j}\lambda_{2j}$ in \eqref{Dub24} and 
further taking $x_0=0.298$, we obtain $y_0\leq 7.21-\lambda$ and 
\begin{align*}
\log \al<\frac{0.3433\left(7.21-\sum_jt_{1j}\lambda_{1j}-\sum_jt_{2j}\lambda_{2j}j\right)}{8-23\times 0.3433}.
\end{align*}
Together with $\al>100$, this gives 
\begin{align}\label{24main}
\sum_jt_{1j}\lambda_{1j}+\sum_jt_{2j}\lambda_{2j}\leq 7.21-\left(\frac{8}{0.3433}-23\right)\log 100\leq 5.8136.
\end{align}
We compute the values of $\lambda_{1j}$ and $\lambda_{2j}$ for $j<420$ and find that 
$$
\lambda_{1j}\leq 5.8136 \quad {\rm for} \quad j\in T_1:=\{j: j\leq 6\}\cup \{12, 13, 14, 37, 38, 39, 40, 41\},
$$
and 
$$
\lambda_{2j}\leq 5.8136 \quad {\rm for} \quad j\in  T_2:=\{j: j\leq 5\}\cup \{12, 37, 38\}.
$$
Thus, by \eqref{24main},  we may suppose that $t_{1j}>0$ implies $j\in T_1$ and $t_{2j}>0$ implies $j\in T_2$.  
Recall that we have $t_{1j}>0$ for  $j=37$ or $j=38$.  Write $t_4, t_5, t_{12}$ for $t_{1j}$ 
according to whether $j=4, 5, 12$, respectively. We find that $\lambda_{1j}\geq 2.639, 3.737, 3.111$ according to whether 
$j=4, 5, 12$, respectively. Hence, from \eqref{24main}, we have 
$t_4\leq 2, t_5\leq 1$ and $t_{12}\leq 1$. Put   
\begin{align*}
\log \cD_1:=\sum_{j\in T_1, j\neq 4, 5, 12}t_{1j}\log C_j+\sum_{j\in T_2}t_{2j}\log B_j,
\end{align*}
so that 
\begin{align}\label{80}
\log \cD&=\log \cD_1+t_4\log C_4+t_5\log C_5+t_{12}\log C_{12}.
\end{align}
We now consider $M_8(\cD)$ given by \eqref{Mn}. 
We find that $M_8(C_j)<0.46\log C_j$ for all $j\in T_1$ except when $j\in \{ 4, 5, 12\}$ and  
$M_8(B_j)<0.46\log B_j$ for $j\in T_2$ and further 
$$M_8(C_4)\leq 0.74, \quad M_8(C_5)\leq 0.53 \quad {\rm and} \quad M_8(C_{12})\leq 0.65. 
$$ 
Hence, from \eqref{80}, \eqref{UD24} and the fact that ${\displaystyle{\frac{1-x^{12}}{1-x}<12}}$, we get 
\begin{align*}
M_8(\cD)< &0.46\log \cD_1+0.74t_4\log C_4+0.53t_5\log C_5+0.65t_{12}\log C_{12}\\
<&0.46\log \cD+0.28t_4\log C_4+0.07t_5\log C_5+0.19t_{12}\log C_{12}\\
< &0.46(23\log \al+\log 12+\log (1+x^{12}))\\
&+0.56\log C_4+0.07\log C_5+0.19\log C_{12},
\end{align*}
since $t_4\leq 2, t_5\leq 1$ and $t_{12}\leq 1$. 
Comparing the above inequality with the lower bound of $M_8(\cD)=M_8(U_{24})$ given by \eqref{n_0U} with 
$n_0=2^3$ and $t=0$, 
we obtain 
\begin{align*}
4.07>&0.56\log C_4+0.07\log C_5+0.19\log C_{12}\\
>&(12-0.46\times 23)\log \al+(1-0.46)\log (1+x^{12})-0.46\log 12-\log 2\\
>&(12-0.46\times 23)\log 100-0.46\log 12-\log 2>4.7
\end{align*}
since $1+x^{12}>0$ and $\al>100$. This is a contradiction. Therefore, 
equation \eqref{eqn1} has no solution with $n=24$ when $\al$ and $\be$ are real. 

\subsection{The case of equation \eqref{eqn2}}
We now consider the equation \eqref{eqn2}.  Since $V_n=U_{2n}/U_n$, we see that primitive 
divisors of $V_n$ are the primitive divisors of $U_{2n}$. From the table listed in the beginning of 
Section 2, we find that  the values of $n\geq 4$ for which $V_n$ does not have a primitive divisor which are given by 
 the instances for which $U_{2n}$ has no primitive divisors  belongs to the set  $\{4, 5, 6, 9\}$. For 
 $n\in \{4, 5, 6, 9\}$ and corresponding pairs $(r, s)$ (which are given by pairs $(r, s)$ corresponding to 
 $2n$ in the table),  we check that  the equation \eqref{eqn2} has no solution.  Hence, for the proof of 
 Theorem 1,  we now assume that $n\geq 4$ and further $V_n$  has a primitive divisor which is  
 congruent to one of  $ \pm 1$ modulo $2n$. 
 
Let $n=4t$ be even. Then 
$$V_{4t} = \alpha^{4t}+\beta^{4t}=(\alpha^{2t}+\beta^{2t})^2-2(\alpha\beta)^{2t}=V^2_{2t}-2(-s)^{2t}.$$ 
For an odd prime $p\mid V_{4t}$, we see that $2$ is a quadratic residue modulo $p$ and hence 
$p\equiv \pm 1\pmod{8}$. We observe that both $C_m$ and $B_m$ are divisible by each prime 
$m+1<p\leq 2m$. By Lemma \ref{pi<30}, there is a prime $p\equiv \pm 5\pmod 8$ with 
$m+1<p\leq 2m$ for each $m\geq 6$.  Thus, equation \eqref{eqn2} implies $m_k\leq 5$ 
which together with the fact that $V_n$ has a primitive prime divisor gives $n=4t=4$.  Further, $\gcd(V_t, s)=1$ 
for all $t\geq 1$ gives $\nu_2(V^2_{2t}-2(s)^{2t})\leq 1$ implying $\nu_2(V_4)\leq 1$. Considering 
$\nu_2(B_m), \nu_2(C_m)$ for $2\leq m\leq 5$ and using the fact that $V_4$ has a primitive prime divisor which is congruent to $\pm 1\pmod 8$, 
we get $V_4=C_4=14$. Now 
 $14=V_4=\al^4+\be^4=r^2(r^2+4s)+2s^2$ and $\gcd(V_4, s)=1$ gives $s$ odd and 
 $r$ even. Reducing the above relation modulo $8$, we get $14\equiv 2\pmod {8}$ which is a contradiction. 
 Thus, equation \eqref{eqn2} does not have a solution for $n$ even with $4\mid n$. 

From now on, we take $n$ odd with $n\geq 5$ or $2||n$ with $n\geq 6$. We have 
$$\log \cD=\sum^k_{i=1}\log D_{m_i}=\log |V_n|=\log |\al^n+\be^n|=n\log |\al|+\log |1+x^n|.$$ 
Since $V_n$ has a primitive divisor which is  congruent to one of $\pm 1$ modulo $2n$, we have 
$2m_k-1\geq 2n-1$ or $m_k\geq n$. By Lemma \ref{m14} and since $|1+x^n|\leq 2$, we have 
\begin{align}\label{logalV}
\log |\al|+\log 2\geq \log |V_n|\geq \log 2+\log C_{m_k/2}\geq \log 2+1.38n \ {\rm for} \ n\geq 2100.
\end{align}
Let $p$ be an odd prime and $h>0, t\geq 0$ be such that $p^{h+t}\mid n$. 
Taking $n_0=p^h$, we use estimate \eqref{n_0V} of Lemma \ref{primn_0} to get a lower bound 
for the quantity $M_{n_0}(\cD)=M_{n_0}(V_n)$ and compare it with the upper bound given by Lemma \ref{MnBD}  
to obtain 
\begin{align*}
g(n_0)(n\log |\al|+\log |1+x^n|)\geq \left(1-\frac{1}{p^{t+1}}\right)n\log |\al|+\log \left|\frac{1+x^n}{1+x^{n/p}}\right| 
-\log p^{t+1}, 
\end{align*}
implying 
\begin{align}\label{DVcmp}
&\left(1-\frac{1}{p^{t+1}}-g(n_0)\right)
\leq & \frac{(g(n_0)-1)\log |1+x^n|+\log |1+x^{n/p^{t+1}}|+\log(p^{t+1})}{n\log |\al|},
\end{align}
where $g(n_0)$ is given by \eqref{gn}. We consider different cases as in  the analysis for $U_n$. 

Let $\al$ and $\be$ be complex conjugates. We may assume that $n\geq 6500$. We choose 
$n_0=p^h$ and $t$ given  by \eqref{nocomp}.  Assume that $\log |\al|\leq 4$.  
Then using 
$$|1+x^n|=\frac{|V_n|}{|\al|^n}, \quad  |1+x^{n/p^{t+1}}|\leq 2 \quad {\rm and} \quad \log |\al|\leq 4,$$
along with \eqref{logalV} in \eqref{DVcmp}, we obtain
 \begin{align*}
0\geq &\frac{(1-g(n_0))\log |V_n|+\log (2p^{t+1})}{n\log |\al|}-\frac{1}{p^{t+1}}\\
\geq &\frac{1.38(1-g(n_0))+\log (2p^{t+1})}{4n}-\frac{1}{p^{t+1}},
\end{align*}
which is the inequality \eqref{al<4U}. As in the case of $U_n$ in Section 4.2, we a get a contradiction. 
Assume now that $\log |\al|>4$. By Lemma \ref{-44.72}, we get 
$$
\log |1+x^n|\geq -f(n)\log |\al|,
$$ 
where 
$f(n)$ be given by \eqref{f(x)}. Using this along with $|1+x^{n/p^{t+1}}|\leq 2$ and $n\log |\al|> 4n$ (since $\log |\al|>4$) 
in \eqref{DVcmp}, we obtain the inequality \eqref{al>4U}.  As in the case of $U_n$ in Section 4.2, we a get a contradiction.  
Therefore, equation \eqref{eqn2} has no solution $n\geq 6500.$

Let $\al$ and $\be$ be real. Then $\alpha>0$. We take $n\geq 5, n\neq 6$. We choose $n_0=p^h$ and $t$ given  
by \eqref{noreal}, $n_0\neq 2^4$. Since $p$ is odd, writing 
$$
1+x^n=(1+x^{n/p})\left(\frac{1+x^n}{1+x^{n/p}}\right),
$$ 
we have  
 \begin{align*}
 |1+x^{n/p}|\leq 2 \quad {\rm and}  \quad \frac{1+x^n}{1+x^{n/p}}=\begin{cases}
 1+\sum^{p-1}_{i=1}(-y)^{i}>1, &  y=x^{\frac{n}{p}}<0;\\
 \frac{1+y^p}{1+y}\geq \frac{1}{1+y}>\frac{1}{2}, &  y=x^{\frac{n}{p}}>0.
\end{cases}
 \end{align*}
 Using this in \eqref{DVcmp}, we obtain 
 \begin{align*}
\log \al &\leq  \frac{(g(n_0)-1)\log\left( \frac{1+x^n}{1+x^{n/p}}\right)+g(n_0)\log (1+x^{n/p})+\log p}{
n(1-{1}/{p}-g(n_0)))}\\
 &\leq \frac{(1-g(n_0))\log 2+g(n_0)\log 2+\log p}{n(1-\frac{1}{p}-g(n_0))}
=  \frac{\log (2p)}{n(1-{1}/{p}-g(n_0))}.
\end{align*}
This together with $n\geq n_0$ and ${\displaystyle{\al\geq \frac{1+\sqrt{5}}{2}}}$ gives  \eqref{alRealU}. 
As in the case of $U_n$ in Section 4.3, we a get a contradiction except for $n_0=p\in \{5, 7\}$. 
Further, for $n_0=p\in \{5, 7\}$, putting $n=p\ell$, we obtain similarly   
$$\log \left(\frac{1+\sqrt{5}}{2}\right)\leq \log \al\leq \begin{cases}
\frac{15.62}{5\ell}, & {\rm if} \ p=5;\\
\frac{8.11}{7\ell}, & {\rm if} \ p=7,
\end{cases} \quad {\rm and} \quad P(U_{p\ell})\leq 
\begin{cases}
29, & {\rm if} \ p=5;\\
19, & {\rm if} \ p=7. 
\end{cases} $$
For the pairs $(r, s)$ given by Lemma \ref{rsBD} with the above conditions, we check that the equation \eqref{eqn2}  
has no solution at $n=p\ell$.  Therefore, equation \eqref{eqn2} has no solution for $\al, \be$ real and 
$n\geq 5,~ n\neq 6$.

Writing $n=p\ell$, we have from $P(V_n)\leq P(B_{m_k})$ that 
$$\log\left( \frac{1+\sqrt{5}}{2}\right)\leq \log \al\leq \begin{cases}
\frac{15.62}{5\ell}, & {\rm if} \ p=5;\\
\frac{8.11}{7\ell}, & {\rm if} \ p=7,
\end{cases} \quad {\rm and} \quad P(V_{p\ell})\leq 
\begin{cases}
29, & {\rm if} \ p=5;\\
19, & {\rm if} \ p=7. 
\end{cases} $$
For the pairs $(r, s)$ given by Lemma \ref{rsBD} with the conditions above, we check that the equation \eqref{eqn1}  
has no solutions.

\qed 

\section{The Proof of Theorem \ref{thm:3}}

First we prove the following result for $s=\pm 1$. 

\begin{lemma}\label{V23}
Let $s\in \{\pm 1\}$ and $r\geq 1$. Then $V_n\in \{C_m, B_m, 2C_m, 2B_m\}$ with $n>1$ and $m>1$ implies $n=3$ or 
\begin{align}\label{V20}
\begin{split}
n=2:\, &\, (r, s; V_2)=(2, 1; B_2);\\
n=2:\, &\, (r, s; V_2)=(4, -1; C_4);\\
n=3:\, &\, (r, s; V_3)=(1, 1; 2C_2), (2, 1; C_4), (5, 1; 2B_4).
\end{split}
\end{align}
\end{lemma}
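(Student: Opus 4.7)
The plan is first to use Theorem~\ref{theorem1} together with a sharpened $M_{n_0}$-comparison (exploiting the very restrictive ``singleton'' structure $V_n = D_m$) to force $n \in \{2, 3, 6\}$, and then to finish each remaining case by hand. Theorem~\ref{theorem1} applied to the one-factor product $D_{m_1} \in \{C_{m_1}, B_{m_1}\}$ (with the occasional factor of $2$ easily absorbed) immediately yields $n < 6500$ and $4 \nmid n$. To squeeze further, note that for any odd prime $p \ge 5$ dividing $n$, taking $n_0 = p$ and $t = 0$ in Lemma~\ref{primn_0} gives $M_p(V_n) \ge (1 - 1/p)\, n \log |\alpha| + O(1)$, while Lemma~\ref{MnBD} gives $M_p(V_n) = M_p(D_m) \le (\delta_0/(p-1))\log D_m$. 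Combining with $n \log|\alpha| \ge \log V_n - \log 2 = \log D_m - \log 2$ yields, for $n$ large, the inequality $(p-1)^2/p \le \delta_0$, which fails for each of the primes $p$ listed in Lemma~\ref{MnBD} (and, with $g(n_0)$ in place of $\delta_0/(p-1)$, for every $p \ge 29$). A parallel computation with $n_0 = 9$, where $\delta_0/\varphi(9) = 3.57/6 < 2/3$, excludes $9 \mid n$. Together with $4 \nmid n$, these constraints pin $n$ down to $\{2, 3, 6\}$, with the residual finite range of ``moderate'' $n$ handled by direct computation in the spirit of Section~\ref{th1}.

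For $n = 2$, the equation $V_2 = r^2 + 2s = D_m$ reduces to requiring $D_m \mp 2$ to be a perfect square. I would dispose of large $m$ via a Stirling-based argument showing that $D_m$ eventually lies strictly between two consecutive squares, and check the bounded range of small $m$ directly. The only survivors are $B_2 - 2 = 4$, giving $(r, s) = (2, 1)$, and $C_4 + 2 = 16$, giving $(r, s) = (4, -1)$; the apparent solution $C_2 + 2 = 4$ is discarded because $r = 2, s = -1$ yields $r^2 + 4s = 0$ (degenerate). For $n = 6$, use the identity $V_6 = V_3^2 - 2s^3 = (r(r^2 + 3s))^2 - 2s$, so $V_6 = D_m$ forces $D_m + 2s$ to be not merely a square but a square of the very specific cubic shape $r(r^2 + 3s)$; the same Stirling-plus-finite-check strategy, combined with this extra cubic constraint, eliminates all candidates. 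For $n = 3$, simply verify that $V_3 = r(r^2 + 3s)$ takes the claimed values at $(r,s) = (1,1), (2,1), (5,1)$.

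The main obstacle is the $n = 6$ case. Lemma~\ref{primn_0} explicitly excludes the divisors $n_0 \in \{6, 12\}$, so the slick $M_{n_0}$-style exclusion used for odd primes $p \ge 5$ dividing $n$ does not apply to $n = 6$ directly. The workaround is the algebraic identity $V_6 = V_3^2 - 2s$, which converts the problem into a ``square plus (or minus) $2$'' Diophantine question about $D_m$; making this rigorous for all $m$ requires both Stirling estimates (to show $D_m$ is eventually trapped strictly between consecutive squares) and a computer-assisted finite check for small $m$. A secondary delicacy is the passage from the asymptotic $M_{n_0}$-exclusion — which is clean for $n$ very large — to the residual finite set of moderate $n < 6500$ with $4 \nmid n$ left open by Theorem~\ref{theorem1}; here one needs an explicit enumeration using Lemma~\ref{rsBD} to bound the pairs $(r, s)$ for each candidate $n$, much as is already carried out in Sections~4.3 and~4.4.
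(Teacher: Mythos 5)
Your reduction to $n\in\{2,3,6\}$ is more labored than necessary and, as written, incomplete: since $s=\pm1$ and $r\ge1$, every nondegenerate choice of $(r,s)$ has real roots (the pairs $(1,-1)$ and $(2,-1)$ being excluded because $\alpha/\beta$ is a root of unity, resp.\ $r^2+4s=0$), so the real case of Theorem~\ref{theorem1} gives $n\in\{1,2,3,6\}$ at once. The $M_{n_0}$ machinery you propose to redeploy would only reproduce Sections~4.3 and~4.5 of the paper, and your appeal to ``direct computation'' for the residual moderate $n<6500$ is precisely the part that needs the real-root structure to be feasible. This, however, is only an inefficiency.

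The genuine gap is in your treatment of $n=2$ and $n=6$. You propose to dispose of large $m$ ``via a Stirling-based argument showing that $D_m$ eventually lies strictly between two consecutive squares.'' No size estimate can do this: every non-square integer lies strictly between two consecutive squares, and the question is whether $D_m\mp2$ is itself a perfect square, i.e.\ whether $D_m$ lands within distance $2$ of a square --- a property that cannot be excluded by growth rate alone for any sequence. The paper's mechanism is arithmetic, not analytic: $V_2=r^2+2s$ and $V_6=V_2(V_2^2-3)$ can only be divisible by odd primes $p$ for which $-2s$ (resp.\ also $3$) is a quadratic residue, whereas $C_m$ and $B_m$ are divisible by every prime in $(m+1,2m]$, and Lemma~\ref{pi<30} supplies a prime in a forbidden residue class in that interval once $m$ is moderately large. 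That is what bounds $m$ and makes the finite check terminate; the same objection applies to your $n=6$ route via $V_6=V_3^2+2s^3$, where the ``extra cubic constraint'' does nothing to control unbounded $m$. Finally, for $n=3$ you only ``verify that $V_3$ takes the claimed values'' at three pairs $(r,s)$; but checking that the listed triples are solutions contributes nothing to the implication, and the application in Theorem~\ref{thm:3} requires knowing these are the \emph{only} solutions. That completeness occupies the bulk of the paper's proof: a cube-root sandwich $r=\lfloor E_m^{1/3}\rfloor$ (or $\lfloor E_m^{1/3}+2\rfloor$) settling $m\le15$, and for $m\ge16$ a quadratic-residue restriction on the primes dividing $r^2+3s$ combined with an explicit upper bound for the part of $B_m$ supported on primes $\equiv 1,\ell_0\pmod{12}$. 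None of this is present in your proposal.
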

\begin{proof}
Let $m\geq 2$ and 
$$
\mathcal{D}_m:=\{C_m, B_m, 2C_m, 2B_m\}.
$$
By Theorem \ref{theorem1} and $C_2=2$,  we have $V_n\in {\mathcal D}_m$ implies $n\in \{1, 2, 3, 6\}$.  Let 
$n\in \{2, 6\}$ and $V_n\in {\mathcal D}_m$. Now $V_2=r^2+2s$ and $V_6=(r^2+2s)((r^2+2s)^2-3)$. 
Let $p\equiv 5\pmod {12}$ be a prime such that $p\mid V_2V_6$.  Then we either have
$r^2\equiv -2s\pmod p$ or $(r^2+2s)^2\equiv 3\pmod p$. This is not possible since both 
${\displaystyle{\left(\frac{\pm 2}{p}\right)=\left(\frac{3}{p}\right)=-1}}$, where $(\frac{\cdot}{\cdot})$ is the Legendre symbol. 
Thus, $p\nmid V_2V_6$ for any prime $p\equiv 5\pmod {12}$. By Lemma \ref{pi<30}, we get $m\leq 8$. 
Further from $s=\pm 1$, we have $\nu_2(r^2+2s)\leq 1$ giving $\nu_2(V_2)=\nu_2(V_6)\leq 1$. 
Using both $5\nmid V_2V_6$ and $\nu_2(V_2)=\nu_2(V_6)\leq 1$, we find that if 
$V_2, V_6\in {\mathcal D}_m$ with $2\leq m\leq 14$, then $V_2, V_6\in \{C_m\}$ implies 
$m\in \{2, 4, 5, 7\};$  $V_2, V_6\in \{2C_m\}$ implies $m=7$; $V_2, V_6\in \{B_m\}$ implies $m=2$ 
and $V_2, V_6\notin \{2B_m\}$. Now $V_2=r^2+2s=E_m\in {\mathcal D}_m$ gives $r^2=E_m-2s$. We check that 
for the values $m\in \{2, 4, 5, 7\}$,  $C_m-2s$ is a square only when $r=2, s=-1, C_2=2$ and 
$r=4, s=-1, C_2=2$; $B_2-2s=6-2s$ is a square only for $r=2, s=1$; and $2C_7-2s$ is not a square. 
These solutions are listed in \eqref{V20} except that we omit $(r,s)=(2,-1)$ since it gives a degenerate characteristic equation.  Let $V_6=(r^2+2s)((r^2+2s)^2-3)=E_m\in {\mathcal D}_m$. 
We check that for $r\leq 3$, $V_6=E_m\in {\mathcal D}_m$ only for $r=2, s=-1, V_6=C_2=2$ and 
we omit $(r,s)=(2,-1)$. For $r+2s\geq 4$, we have $r_1=r^2+2s\geq 14$ and hence $(r_1-2)^3<r_1(r^2_1-3)=V_6\leq C_{7}=429$. 
This gives $r_1=r^2+2s\leq 9$, which is not possible. 

Let $n=3$ and $V_3=r(r^2+3s)=E_m\in {\mathcal D}_m$. For $r\leq 3$, we check that indeed $V_3=E_m$ only for 
the pair $(r, s)=(2, 1)$. We now take $r\geq 4$. Using the inequality $r^3<r(r^2+3)=E_m<(r+1)^3$ when $s=1$
and $(r-2)^3<r(r^2-3)=E_m<(r-1)^3$ when $s=-1$, we get $r=\lfloor E^{1/3}_m\rfloor$ when $s=1$ and 
$r=\lfloor E^{1/3}_m+2\rfloor$ when $s=-1$. For $m\leq 15$, we find that putting $r=\lfloor E^{1/3}_m\rfloor$ gives 
$r(r^2+3)=E_m$ only when $r=2, ~E_m=C_4=7$ and $r=5,~ E_m=2B_4=140$ which are already listed in \eqref{V20} (except that we again omit $(r,s)=(2,-1)$) and 
putting $r=\lfloor E^{1/3}_m+2\rfloor$ gives $r(r^2-3)\neq E_m$. Thus, we assume that $m\geq 16$.  

We observe that $\nu_2(r^2+3)\leq 2, \nu_2(r^2-3)\leq 1$ and $\nu_3(r^2\pm 3)\leq 1$. Further we observe 
that primes $p\mid r^2+3s$ with $p>3$ satisfy ${\displaystyle{\left(\frac{-3s}{p}\right)=1}}$. We get $p\equiv 1, 7\pmod {12}$ if $p>3$ when 
$s=1$ and $p\equiv \pm 1\pmod {12}$ if $p>3$ when $s=-1$.  We take $n_0=12$ and $\ell_0=7, -1$, 
according to whether $s=1, -1$, respectively. From the equation 
$$
V_3=r(r^2+3s)=E_m, \quad E_m\in {\mathcal D}_m,
$$ 
we obtain 
$$\log (r^2+3s)=2\log r+\log\left(1+\frac{3s}{r^2}\right)\leq \xi(m)+\log 3+\begin{cases}
\log 4, & {\rm if} \quad s=1;\\
\log 2, & {\rm if} \quad s=-1,
\end{cases}$$
where 
\begin{align*}
\xi(m)=\displaystyle{\sum _{ p\equiv 1, \ell_0\pmod {12}}}\nu_p(2B_m)\log p=
\displaystyle{\sum _{ p\equiv 1, \ell_0\pmod {12}}}\nu_p(B_m)\log p.
\end{align*}
From 
$$\log E_m=\log r(r^2+3s)=\frac{3}{2}\left(2\log r+\log\left(1+\frac{3s}{r^2}\right)\right)-\frac{\log(1+{3s}/{r^2})}{2},$$
and 
$$\frac{3}{2}\log 4-\log (1+3s/r^2)<\begin{cases}
\frac{3}{2}\log 4, & {\rm if} \quad s=1;\\
\frac{3}{2}\log 2-\log(1-3/16) <\frac{3}{2}\log 4, & {\rm if} \quad s=-1,
\end{cases}$$
since $r\geq 4$, together with $E_m\geq C_m$, we get 
$$\log C_m\leq \log E_m<\frac{3}{2}\left(\xi(m)+\log 12\right)  \quad {\rm implying} \quad 
\frac{2}{3}<\frac{\xi(m)}{\log C_m}+\frac{\log 12}{\log C_m}.$$
Hence, 
\begin{align}\label{2/3}
\log C_m<\frac{\log 12}{\frac{2}{3}-\frac{\xi(m)}{\log C_m}}.
\end{align}
For $16\leq m\leq  35$, we find that $\frac{\xi(m)}{\log C_m}<0.52$ and therefore 
$$
\log C_{m}<\frac{\log 12}{\frac{2}{3}-0.52}<\log C_{16},
$$ 
which is a contradiction. Thus, we have 
$m\geq 36$. For $36\leq m<1500$, we check that $\frac{\xi(m)}{\log C_m}<0.59$ and hence 
$\log C_{m}<\frac{\log 12}{\frac{2}{3}-0.59}<\log C_{36}$, which is a contradiction again. Thus, 
$m\geq 1500$. As in the proof of Lemma \ref{MnBD}, we get 
\begin{align*}
\begin{split}
\xi(m)&\leq \sum _{\substack{(2m)^{1/2}<p\leq 2m \\ p\equiv 1, \ell_0\pmod {12}}}
\left( \left\lfloor \frac{2m}{p}\right\rfloor-2\left\lfloor \frac{m}{p}\right\rfloor\right)\log p \\
& + \sum _{\substack{p\leq (2m)^{1/2}\\ p\equiv 1, \ell_0\pmod {12}}}\left\lfloor \frac{\log (2m)}{\log p}\right\rfloor \log p\\
& \leq \sum_{\ell\in \{1, \ell_0\}}\left\{\psi(2m; 12, \ell)+\sum^3_{t=2}\theta(2m/t; 12, \ell)-\sum^2_{t=1}\theta(m/t; 12, \ell)\right\}. 
\end{split}
\end{align*}
The above inequality with Lemma \ref{pi<30}, yields
$$
\xi(m)\leq \frac{\delta_{1}m}{4}=\frac{\delta_1m}{4\log C_m}\log C_m
\quad {\rm   for} \quad  m\geq 1500,
$$ 
where 
\begin{align*}
\delta_{1}&=\frac{47}{15}+\frac{2\sqrt{2}\times 0.863}{\sqrt{1500}}\left(1+\frac{1}{\sqrt{3}}+\frac{1}{\sqrt{5}}\right)
+\frac{2\times 1.5}{\sqrt{1500}}\left(1+\frac{1}{\sqrt{2}}\right).
\end{align*}
Hence, 
$$\frac{\xi(m)}{\log C_m}\leq \frac{\delta_1}{4}\frac{m}{\log C_m}
\leq \frac{\delta_1}{4}\frac{1500}{\log C_{1500}}<0.62,
$$
by Lemma \ref{m14}. Inserting this last estimate into \eqref{2/3}, we get 
$$\log C_m<\frac{\log 12}{\frac{2}{3}-0.62}<54<\log C_{50}.$$
This is a contradiction and the proof of Lemma \ref{V23} is complete.
\end{proof}

\noindent 
{\bf Proof of Theorem \ref{thm:3}:} Let $d$ be a squarefree positive integer and assume  $\varepsilon\in \{\pm 1\}$. 
Let $(X_n, Y_n)$  be the $n$th solution of the equation $X^2-dY^2=\varepsilon$. Then 
$X_n=(\alpha^n+\beta^n)/2$ where $(\alpha,\beta)$ are the two roots of the quadratic 
$x^2-(2X_1)x+\varepsilon=0$. Observe that $C_2=2$. Thus, $X_n\in \{C_m, B_m\}$ gives 
$V_n\in \{2C_m, 2B_m\}$ where $V_n=\alpha^n+\beta^n$ and $s=-\alpha\beta=\pm 1$. Then for $n>1$, $V_n$ is given by 
Lemma \ref{V23}, namely $n=3$,  $V_n\in \{2C_2, 2B_4\}$. Then   
$X_n=V_n/2\in \{C_2=2, B_4=70\}$.  The solutions given by 
\begin{align*}
&2^2-3\cdot 1^2=1, \qquad \hspace{1.5cm} \quad 2^2-5\cdot 1^2=-1\\
 {\rm and} \qquad & 70^2-3\cdot  23 \cdot 71\cdot 1^2=1, \qquad \quad  70^2-29\cdot 13^2=-1, 
\end{align*}
are exactly $(X_1, Y_1)$ of the corresponding Pell equations and the assertion of Theorem \ref{thm:3} for 
$X_n$ follows. 

We now consider solutions $(W_n, Z_n)$ of $W^2-dZ^2=4\varepsilon$ with $\varepsilon\in \{\pm 1\}$. 
Assume that $W_n\in \{C_m, B_m\}$. Note that by putting 
$$
\alpha:=(W_1+{\sqrt{d}}Z_1)/2\quad {\text{\rm and}}\quad \beta:=(W_1-{\sqrt{d}}Z_1)/2,
$$
we have that $W_n=\alpha^n+\beta^n=V_n$ and $s=-\alpha\beta=\pm 1$. By Lemma \ref{V23}, we get that 
either $n=1$ or $n=2$ with $V_2\in \{C_2=2, B_2=6, C_4=14\}$ or 
$n=3$ with $V_3\in \{C_2=2, C_4=14\}$ or $n=6$ with $V_6=C_2=2$. For $n\neq 1$, we have solutions
\begin{align*}
&d=2, (W_2,Z_2)=(B_2, 4) \ {\rm with} \  6^2-2\cdot 4^2=4 \ ({\rm and} \ (W_1,Z_1)=(2, 2)); \\
&d=2, (W_2, Z_2)=(C_4, 10)\ {\rm with} \  14^2-2\cdot 10^2=-4 \ ({\rm and} \ (W_1,Z_1)=(2, 2));\\
&d=3, (W_2, Z_2)=(C_4, 8)\ {\rm with} \  14^2-3\cdot 8^2=4 \ ({\rm and} \ (W_1,Z_1)=(4, 2)). 
\end{align*}
The solution given by $B^2_2-10\cdot 2^2=6^2-40=-4$ is exactly $(W_1, Z_1)=(6, 2)$ for $d=10$. 
 This finishes the proof of Theorem \ref{thm:3}. 
\qed
\section*{Acknowledgements}

Part of this work was done when the first author visited the School of Maths of Wits University in December 2018. 
 He thanks this Institution for hospitality and CoEMaSS Grant RTNUM18 for financial support.  Part of this work was done when the second author visited the Max Planck Institute for 
 Mathematics in Bonn, Germany in Fall of 2019. This author thanks MPIM for hospitality.


\begin{thebibliography}{9999}


\bibitem{BMOR} M. A. Bennett, G. Martin, K. O'Bryant  and A. Rechnitzer, \emph{Explicit Bounds for Primes in Arithmetic 
Progressions},  Illinois J. Math. {\bf 62} (2018), 427--532.

\bibitem{BHV} Y. Bilu, G. Hanrot and P. M. Voutier, \emph{Existence of primitive divisors of Lucas and Lehmer numbers, with an appendix by M. Mignotte}, J. Reine Angew. Math.  {\bf 539} (2001), 75--122.

\bibitem{LaLuSi} S. Laishram, F. Luca and M. Sias, \emph{On members of Lucas sequences which are products of factorials}, Monatshefte f\"ur Mathematik, to appear. 

\bibitem{Lehmer} W. Ljunggren, \emph{\"Uber die Gleichung $x^4-Dy^2=1$}, Arch. Math. Naturvid. {\bf 45} (1942), 61--70. 

\bibitem{Lj} D. H. Lehmer, \emph{On a problem of St\"ormer}, Illinois J. Math. {\bf 8} (1964),  57--79.

\bibitem{LuOd} F. Luca and W. P.  Odyniec, \emph{The characterisation of van Kampen-Flores complexes by means of system of Diophantine equations}, 
Vestn. Syktyvkar. Univ. Ser. 1 Mat. Mekh. Inform. {\bf 5} (2003), 5--10. 

\bibitem{Lu} F. Luca, \emph{Advanced Problem H-599}, The Fibonacci Quarterly {\bf 41} (2003), p. 380, \emph{Solution: Fibonacci meets Catalan} The Fibonacci Quarterly {\bf 42} (2004), 284. 

\bibitem{LuPa} F. Luca and F. Pappalardi, \emph{Members of binary recurrent
sequences on lines of the Pascal triangle,} Publ. Math. (Debrecen) {\bf 67} (2005), 103--113.

\bibitem{MV} H. L. Montgomery and R. C. Vaughan, \emph{The large sieve}, Mathematika {\bf 20} (1973), 119--134. 

\bibitem{RaRu} O. Ramar\'e and R. Rumely, \emph{ Primes in Arithmetic Progression}, Math. Comp. 65 (1996), 397--425.

\bibitem{Schi} Q. Sun and P. Z. Yuan, \emph{A note on the Diophantine equation $x^4-Dy^2=1$}, Sichuan Daxue Xuebao {\bf 34} (1997), 265--268.

\bibitem{Vou} P. Voutier, \emph{Primitive divisors of Lucas and Lehmer sequences, III},  Math. Proc. Cambridge 
Philos. Soc.  {\bf 123} (1998), 407--419.

\end{thebibliography}
\end{document}